\newtheorem{theorem}{Theorem}[section]
\newtheorem{lemma}[theorem]{Lemma}
\newtheorem{corollary}[theorem]{Corollary}
\newtheorem{proposition}[theorem]{Proposition}
\newtheorem{remark}[theorem]{Remark}
\theoremstyle{definition}
\numberwithin{equation}{section} 
\DeclareMathOperator{\Idb}{{\mathbb I}}
\DeclareMathOperator{\Rdb}{{\mathbb R}}
\DeclareMathOperator{\Al}{{\mathcal A}}
\numberwithin{equation}{section}
\begin{document}

\title[Operator inequalities for relative operator entropies]{Refined operator inequalities for relative operator entropies}

\author{Shuzhou Wang}
\address{Department of Mathematics, University of Georgia, Athens, GA, 30602}
\email{szwang@uga.edu}
\author{Zhenhua Wang}
\address{Department of Mathematics, University of Georgia, Athens, GA, 30602}
\email{ezrawang@uga.edu}

\subjclass[2010]{Primary 
	47A63,
	94A17, 
	47A56,	
	46L70, 
	47A60;
	Secondary  
	46N50, 
	17C65, 
	47L30,
	81R15,  
	81P45}
\keywords{Operator entropies, JC-algebras, Operator inequalities, Noncommutative perspective}
\date{}

\begin{abstract}
 In this paper, we investigate the relative operator entropies in the more general settings of C*-algebras, real C*-algebras and JC-algebras. We show that all the operator inequalities on relative operator entropies still hold in these broader settings. In addition, we improve the lower and upper bounds of the relative operator $(\alpha, \beta)$-entropy established by Nikoufar in \cite{NIKOUFAR2014376} and \cite{Nikoufar2020} 
which refined the bounds for the relative operator entropy obtained by 
Fujii and Kamei \cite{fujii1989relative, fujii1989uhlmann}.
\end{abstract}

\maketitle

\section{Introduction}
It is well known that entropy first appeared in thermodynamics. It plays an important role in quantum mechanics and is a fundamental notion for quantum information theory. In \cite{Segal1960}, Segal gave a mathematical formulation of the entropy of a state of a semi-finite von Neumann algebra, which included the cases of the information theory and quantum statistical mechanics. To investigate the background of Segal's definition, Nakamura and Umegaki \cite{nakamura1961} considered the operator entropy $-A\log(A)$ for positive invertible operator $A$. 
Later, Umegaki \cite{umegaki1962} introduced the relative operator entropy 
to discuss the measures of entropy and information. The concept of relative operator entropy for strictly positive operators in noncommutative information theory was initially introduced by Fujii and Kamei in \cite{fujii1989relative,fujii1989uhlmann} as follows:
\begin{align}
\label{roe}
S(A|B)=A^{\frac{1}{2}} [\log(A^{-\frac{1}{2}}BA^{-\frac{1}{2}})]A^{\frac{1}{2}}.
\end{align}
Extending the relative operator entropy, 
Furuta in \cite{furuta2004parametric} defined the generalized relative operator entropy for the strictly positive operators $A, B$ and $\alpha\in \Rdb$ by setting
\begin{align}
\label{groe1}
S_{\alpha}(A|B)=A^{\frac{1}{2}}[(A^{-\frac{1}{2}}BA^{-\frac{1}{2}})^{\alpha} \log(A^{-\frac{1}{2}}BA^{-\frac{1}{2}})]A^{\frac{1}{2}}.	
\end{align}
If $\alpha=0,$ then $S_0(A|B)$ precisely is the relative operator entropy.  
Moreover, Nikoufar \cite{nikoufar_2019} introduced 
the notion of the relative operator $(\alpha, \beta)$-entropy as follows:
\begin{align}
\label{groe2}
S_{\alpha,\beta}(A|B)=
A^{\frac{\beta}{2}}[(A^{-\frac{\beta}{2}}BA^{-\frac{\beta}{2}})^{\alpha}
\log(A^{-\frac{\beta}{2}}BA^{-\frac{\beta}{2}})]A^{\frac{\beta}{2}}
\end{align}
for invertible positive operators $A, B$ and any real numbers $\alpha, \beta.$
In particular, $S_{\alpha, 1}(A|B)=S_{\alpha}(A|B)$ and $S_{0, 1}(A|B)=S(A|B).$ 

In a different direction, Lieb \cite{LIEB1973267} and Lieb and Ruskai \cite{Lieb19731938}
published in 1973 two articles on operator inequalities 
that had a significant impact on quantum mechanics and more recently on quantum information theory. 
Subsequently, much effort were made to explicate and generalize these results. Notably, Effros  \cite{effros2009matrix} presented a simple approach to the operator inequalities of Lieb \cite{LIEB1973267} 
 by utilizing an operator version of perspective function. More specifically, let $f$ and $h$ be real continuous function on a closed interval $I\subset \Rdb$ and $A$ and $B$ two  positive {\em commuting} operators. Effros's perspective function was defined by 
$$({f\Delta h})(A, B)=f\left( \dfrac{A}{h(B)}\right)h(B).$$  
Extending Effros \cite{effros2009matrix}, 
Ebadian and his collaborators \cite{ebadian2011perspectives} introduced a {\em noncommutative} 
perspective function $P_{f\triangle h}(A,B)$ of two variables $A$ and $B$ associated to $f$ and $h$, 
where $A$ is a self-adjoint operator and $B$ is a positive invertible operator on a Hilbert space $H$ with spectra in a closed interval $I$ containing $0$, by setting 
\begin{equation}
\label{persp}
P_{f\triangle h}(A,B)
=h(B)^{\frac{1}{2}}f\left(h(B)^{-\frac{1}{2}}Ah(B)^{-\frac{1}{2}}\right)h(B)^{\frac{1}{2}} 
\end{equation}

Note that in this approach the assumption of the commutativity of $A$ and $B$ is dropped  
and this should have a deep influence on quantum mechanics and quantum information theory.
With such a powerful notion, Ebadian {\sl et al} first proved the necessary and sufficient condition of jointly convexity for noncommutative generalized perspective function {\sl ibid.}, 
then they gave arguably the simplest proof of Lieb concavity theorem and Ando convexity theorem \cite{NIKOUFAR2013531}. More recently, in a series of papers 
\cite{NIKOUFAR2014376, nikoufar2018generalized, nikoufar_2019, Nikoufar2020}, Nikoufar investigated  the convexity property of relative operator $(\alpha, \beta)$-entropy and obtained, later refined its lower and upper bounds by taking advantage of noncommutative perspective function approach.

In the present paper, we study the relative operator entropies in the more general settings of C*-algebras, real C*-algebras and JC-algebras. We show that all the operator inequalities on relative operator entropies still hold  in our settings. Moreover, we refine the lower and upper bounds of the relative operator $(\alpha, \beta)$-entropy established by Nikoufar in \cite{NIKOUFAR2014376} and \cite{Nikoufar2020} 
which improved the bounds for the relative operator entropy obtained 
earlier by Fujii and Kamei \cite{fujii1989relative,fujii1989uhlmann}.

In the rest of this section we give some background and fix the notation. 
The self-adjoint part $B(H)_{sa}$ of all bounded operators 
$B(H)$ on a complex Hilbert space $H$ is a Jordan algebra under the 
{\em non-associative} product $A \circ B := \frac{1}{2}(AB+BA)$.  
A {\bf JC-algebra} is 
a norm-closed  {\bf  Jordan subalgebra} of the Jordan algebra $B(H)_{sa}$; 
it is an ordered space with order inherited from that of $B(H)_{sa}$ via 
the cone $B(H)_{+}$ of positive operators on $H$. 
The notion of JC-algebras arose in the work of Jordan, von Neumann, and Wigner on the axiomatic
foundations of quantum mechanics.  The `observables' in a quantum 
system naturally constitute a JC-algebra and play the central role in the formulation of quantum theory.
For $A, B$ in a unital JC-algebra, we have 
the important identity of Jordan algebras 
\begin{equation}
\label{ija}
ABA=2(A\circ B)\circ A-A^2\circ B
\end{equation} 
If $A$ is strictly positive, then for any $\alpha\in \Rdb,$ $A^{\alpha}$ is contained in JC-algebra generated by $A, 1$,  which is automatically associative. 
More information on JC-algebras can be found in \cite{topping1965jordan}

In this paper, $\Al$ denotes a C*-algebra, or a real C*-algebra, or a JC-algebra, with a unit. 
For two strictly positive elements $A, B$ in $\Al$ and $\alpha, \beta$ in $\Rdb,$ 
the operator $(\alpha, \beta)$-geometric mean is defined as
\begin{equation}
A\#_{(\alpha, \beta)}B=A^{\frac{\beta}{2}}(A^{-\frac{\beta}{2}}BA^{-\frac{\beta}{2}})^{\alpha }A^{\frac{\beta}{2}}
\end{equation}
Note that if $\beta=1,$ then $A\#_{(\alpha, 1)}B$ is the $\alpha$-geometric mean $A\#_{\alpha}B$ in the sense of Ando \cite{ando1979concavity}.  
The crucial observation is that relative operator entropies 
$S(A|B)$, 
$S_{\alpha}(A|B)$, 
$S_{\alpha,\beta}(A|B)$ and 
perspective function   
$P_{f\triangle h}(A,B)$ defined in (\ref{roe}), (\ref{groe1}), (\ref{groe2}), (\ref{persp}) all lie in $\Al$ for  all three cases; in the case of a JC-algebra, this is due to (\ref{ija}) and 
functional calculus \cite{Alfsen2003}.  

\section{Main Results}

For the special case of a $C^*$-algebra $\Al = B(H)$, the following is 
Theorem 1 in \cite{Nikoufar2020}. 

\begin{theorem}
\label{mnp} 
	Let $\Al$ be a $C^*$-algebra, 
	or a real $C^*$-algebra, or a JC-algebra, with unit.	
	Let $r, q, k$ and $h$ be real valued continuous functions on a closed interval $\Idb$ such that $h>0$ 
	and $r(x)\leq q(x)\leq k(x)$. For a positive invertible element $A$ 
	and a self-adjoint element $B$ in $\Al$ such that the spectra of $A$ and $h(A)^{-1/2}Bh(A)^{-1/2}$ are contained in $\Idb$, 
	\begin{equation}
	P_{r\triangle h}(B, A)\leq P_{q\triangle h}(B, A)\leq P_{k\triangle h}(B, A).
	\end{equation}
\end{theorem}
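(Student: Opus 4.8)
The plan is to reduce the chain of inequalities to two elementary facts—monotonicity of the functional calculus and order-preservation under congruence—both of which hold verbatim in each of the three settings. First I would abbreviate $C := h(A)^{-1/2}Bh(A)^{-1/2}$ and $S := h(A)^{1/2}$. Since $h>0$ is continuous on $\Idb$ and $\Sp{A}\subseteq\Idb$, the element $h(A)$ is positive and invertible, so $S$ is a well-defined positive invertible element; since $B$ is self-adjoint, so is $C$, and by hypothesis $\Sp{C}\subseteq\Idb$. Unwinding the definition (\ref{persp}) with the slots in the stated order gives, for each $f\in\{r,q,k\}$,
\[
P_{f\triangle h}(B,A)=S\,f(C)\,S,
\]
so the claim becomes exactly $S\,r(C)\,S\le S\,q(C)\,S\le S\,k(C)\,S$.

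Next I would establish the pointwise-to-operator monotonicity $r(C)\le q(C)\le k(C)$. Because $C$ is a single self-adjoint element, the associative subalgebra it generates together with the unit is commutative and, via the continuous functional calculus, isomorphic to the function algebra on $\Sp{C}$; in the JC-algebra case this associativity is precisely the fact recorded just after (\ref{ija}). Under this isomorphism the order inherited from $B(H)_{sa}$ corresponds to the pointwise order of functions on $\Sp{C}$, so the hypothesis $r(x)\le q(x)\le k(x)$ on $\Idb\supseteq\Sp{C}$ transfers directly to $r(C)\le q(C)\le k(C)$.

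Finally I would pass from $r(C)\le q(C)\le k(C)$ to the conjugated inequalities by congruence: if $X\le Y$ are self-adjoint and $S$ is self-adjoint, then $SXS\le SYS$, since writing $Y-X=T^2$ with $T=(Y-X)^{1/2}$ self-adjoint gives $S(Y-X)S=(TS)^*(TS)\ge 0$. Applying this with $(X,Y)$ running over consecutive members of $r(C),q(C),k(C)$ yields the theorem. The only point needing care is the JC-algebra case, where the triple products $S\,f(C)\,S$ are a priori computed in $B(H)$; here the Jordan identity (\ref{ija}) guarantees that each such product again lies in $\Al$, and since the order on $\Al$ is exactly the one inherited from $B(H)_{sa}$, the inequality established in $B(H)_{sa}$ is automatically an inequality in $\Al$. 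I expect this bookkeeping—confirming that every element produced remains inside $\Al$ and that the inherited order behaves as in $B(H)_{sa}$—to be the main, though modest, obstacle, the two analytic facts themselves being entirely standard.
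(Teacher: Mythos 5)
Your proposal is correct and follows essentially the same route as the paper's proof: both reduce the claim to the order-preserving property of the continuous functional calculus applied to $C=h(A)^{-1/2}Bh(A)^{-1/2}$ (the paper phrases this as $f(C)\geq 0$ for $f=q-r$ and $f=k-q$, you phrase it as $r(C)\le q(C)\le k(C)$), followed by the fact that congruence by the positive element $h(A)^{1/2}$ preserves positivity. Your write-up merely makes explicit two points the paper leaves implicit, namely the proof that conjugation preserves order and the bookkeeping that in the JC-algebra case the triple products stay in $\Al$ with the inherited order.
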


\begin{proof}
	Let $f = q - r$ or  $f = k- q $. Then
	$ f(h(A)^{-\frac{1}{2}}Bh(A)^{-\frac{1}{2}}) \geq 0$ since the three functional calculi are 
	order preserving, cf. \cite{eilers2018c, lireal2003, Alfsen2003}. Therefore, 
	$$ h(A)^{\frac{1}{2}} f(h(A)^{-\frac{1}{2}}Bh(A)^{-\frac{1}{2}}) h(A)^{\frac{1}{2}} \geq 0.$$  
\end{proof}
\begin{remark}
	In general, the spectrum of an element in a real $C^*$-algebra is always symmetric with respect to the real axis. 
	Therefore the three functional calculi for $C^*$-algebra, real $C^*$-algebra, and JC-algebra give rise to two different elements, cf. \cite{eilers2018c, lireal2003, Alfsen2003}. 
	For instance, for a normal element $A$ in a $C^*$-algebra or real $C^*$-algebra, and an appropriate continuous function $f$, $f(A)$ for the $C^*$-algebra is different for $f(A)$ for the 
	real $C^*$-algebra since the spectra of $A$ in these two cases are different in general. 
	
	For self-adjoint element $A$ in a $C^*$-algebra, or real $C^*$-algebra, or JC-algebra, $f(A)$ gives the same element in all three cases.   
	\end{remark}
	
The following is a quick application of Theorem \ref{mnp} 
to three  operator means, namely, weighted harmonic mean, weighted geometric mean, 
and weighted arithmetic mean in our settings, extending \cite{Ando1978topics}.
\begin{proposition}
Let $A, B$ be positive invertible elements in $\Al.$ For any $0\leq \lambda \leq 1,$ 
\begin{align} \label{whga}
\left((1-\lambda)A^{-1}+\lambda B^{-1}\right)^{-1}\leq 
A^{\frac{1}{2}}\left(A^{-\frac{1}{2}}BA^{-\frac{1}{2}}\right)^{\lambda}A^{\frac{1}{2}}
\leq (1-\lambda)A+\lambda B.	
\end{align}	
\end{proposition}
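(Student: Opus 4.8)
The plan is to realize all three expressions as values of a single noncommutative perspective function and then quote Theorem \ref{mnp}. I would take $h$ to be the identity function $h(x)=x$, which is strictly positive and continuous on any closed interval $\Idb \subset (0,\infty)$. Since $A$ and $B$ are positive and invertible, the element $X := A^{-\frac12}BA^{-\frac12}$ is positive and invertible as well, so I can choose $\Idb$ to contain the spectra of $A$ and of $X = h(A)^{-\frac12}Bh(A)^{-\frac12}$. With this choice, $P_{f\triangle h}(B,A) = A^{\frac12}f(X)A^{\frac12}$ for every continuous $f$ on $\Idb$.

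Next I would write down the three scalar functions on $\Idb$ that produce the harmonic, geometric, and arithmetic means respectively:
\[
r(x)=\frac{x}{(1-\lambda)x+\lambda},\qquad q(x)=x^{\lambda},\qquad k(x)=(1-\lambda)+\lambda x.
\]
The middle identification $P_{q\triangle h}(B,A)=A^{\frac12}X^{\lambda}A^{\frac12}$ is immediate, and the arithmetic one follows from $A^{\frac12}\big((1-\lambda)+\lambda X\big)A^{\frac12}=(1-\lambda)A+\lambda B$. For the harmonic mean I would use $X^{-1}=A^{\frac12}B^{-1}A^{\frac12}$ to get $(1-\lambda)A^{-1}+\lambda B^{-1}=A^{-\frac12}\big((1-\lambda)+\lambda X^{-1}\big)A^{-\frac12}$, whence its inverse equals $A^{\frac12}\big((1-\lambda)+\lambda X^{-1}\big)^{-1}A^{\frac12}=A^{\frac12}r(X)A^{\frac12}=P_{r\triangle h}(B,A)$.

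It then remains to check the pointwise ordering $r(x)\leq q(x)\leq k(x)$ on $(0,\infty)$, after which Theorem \ref{mnp} delivers \eqref{whga} directly. Both inequalities reduce to the scalar weighted arithmetic--geometric inequality $t^{s}\leq st+(1-s)$ for $t>0$ and $0\leq s\leq 1$: the right inequality $q\leq k$ is this with $s=\lambda$ and $t=x$, while the left inequality $r\leq q$, after clearing the positive denominator and dividing by $x^{\lambda}$, becomes $x^{1-\lambda}\leq (1-\lambda)x+\lambda$, the same inequality with $s=1-\lambda$.

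I expect no serious obstacle here: the only point requiring care is the algebraic identification of the harmonic mean as a perspective function, which hinges on the conjugation identity for $X^{-1}$, together with confirming that these manipulations (inverses, the symmetric products $A^{\frac12}(\cdot)A^{\frac12}$, and the functional calculus for $r,q,k$) are legitimate in the Jordan setting. This legitimacy is guaranteed by the identity \eqref{ija} and functional calculus, as already noted for the perspective function in the C*, real C*, and JC cases.
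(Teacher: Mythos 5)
Your proof is correct and takes essentially the same approach as the paper: the paper's proof simply records the scalar inequalities $\left((1-\lambda)+\lambda x^{-1}\right)^{-1}\leq x^{\lambda}\leq (1-\lambda)+\lambda x$ for $x>0$ and applies Theorem \ref{mnp} with $h(x)=x$. Your write-up just fills in the details the paper leaves implicit, namely the weighted AM--GM verification of the scalar inequalities and the algebraic identification of the three means as the perspective functions $P_{r\triangle h}(B,A)$, $P_{q\triangle h}(B,A)$, $P_{k\triangle h}(B,A)$.
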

\begin{proof}
One sees that for any $0\leq \lambda \leq 1$
\begin{align}\label{hga1}
{\displaystyle \left((1-\lambda)1+\lambda x^{-1}\right)^{-1}\leq x^{\lambda}\leq  (1-\lambda)1+\lambda x}	
\end{align}
hold for all $x>0.$ Applying Theorem \ref{mnp} to the inequalities (\ref{hga1}) with $h(x)=x$ gives  (\ref{whga}). 
\end{proof}
\begin{remark}
If  ${\displaystyle\lambda=\frac{1}{2}},$ then the inequalities {\rm (\ref{whga})} become the classical ones for harmonic mean, geometric mean and arithmetic mean.  \end{remark}

Suppose that $A, B$ are strictly positive elements in $\Al$, which 
denotes a C*-algebra, or a real C*-algebra, or a JC-algebra, with a unit.  
With notation as in {\rm (\ref{groe2})}, for real numbers $\alpha\geq 0$ and $\beta>0,$ we set 
\begin{align}
\label{I}
{\rm I}&=2A^{\frac{\beta}{2}}
\left[\left(1-2(1+A^{-\frac{\beta}{2}}B A^{-\frac{\beta}{2}})^{-1}\right)(A^{-\frac{\beta}{2}}B A^{-\frac{\beta}{2}})^{\alpha}\right]A^{\frac{\beta}{2}}\\
\label{II}
{\rm II}&=4A\#_{(\alpha, \beta)}B-8A^{\frac{\beta}{2}}[(A^{-\frac{\beta}{2}}B A^{-\frac{\beta}{2}})^{\alpha}\left ( (A^{-\frac{\beta}{2}}B A^{-\frac{\beta}{2}})^{\frac{1}{2}}+1  \right )^{-1}]A^{\frac{\beta}{2}}\\
\label{III}
{\rm III}&=A\#_{(\alpha+\frac{1}{2}, \beta)}B-A\#_{(\alpha-\frac{1}{2}, \beta)}B\\
\label{V}
{\rm V}&=\frac{1}{2}(A\#_{(\alpha+1, \beta)}B-A\#_{(\alpha-1, \beta)}B) 		
\end{align}
Then ${\rm I}, {\rm II}, {\rm III}$, and ${\rm V}$ are in $\Al$ by functional calculi in 
C*-algebra, real C*-algebra, and JC-algebra, \cite{eilers2018c, lireal2003, Alfsen2003}.  

%

\begin{proposition}\label{orlurnp} 
Let $A$ and $B$ be two positive invertible elements in $\Al$. 
Then for $\alpha\geq 0$ and $\beta>0,$ 
\begin{enumerate}
\item[(i)] $A\#_{(\alpha, \beta)}B-A\#_{(\alpha-1, \beta)}B\leq 
{\rm I}\leq A\#_{(\alpha+1, \beta)}B-A\#_{(\alpha, \beta)}B.$

\item[(ii)] $A\#_{(\alpha, \beta)}B-A\#_{(\alpha-1, \beta)}B\leq 
{\rm V}\leq A\#_{(\alpha+1, \beta)}B-A\#_{(\alpha, \beta)}B.$	

\item[(iii)] $A\#_{(0, \beta)}B-A\#_{(-1, \beta)}B\leq A\#_{(\alpha, \beta)}B-A\#_{(\alpha-1, \beta)}B.$
\end{enumerate}
\end{proposition}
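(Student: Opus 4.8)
The plan is to recognize every quantity occurring in (i)--(iii) as a noncommutative perspective function in the sense of (\ref{persp}) and then invoke Theorem \ref{mnp}. Put $h(x)=x^{\beta}$ and write $X=A^{-\frac{\beta}{2}}BA^{-\frac{\beta}{2}}=h(A)^{-\frac12}Bh(A)^{-\frac12}$, a strictly positive element whose spectrum, together with that of $A$, lies in some interval $\Idb\subset(0,\infty)$ bounded away from $0$ (this uses the strict positivity of $A$ and $B$). With this notation $A\#_{(\gamma,\beta)}B=A^{\frac{\beta}{2}}X^{\gamma}A^{\frac{\beta}{2}}=P_{g\triangle h}(B,A)$ for $g(x)=x^{\gamma}$ and every real $\gamma$, while a short computation gives $1-2(1+X)^{-1}=(X-1)(X+1)^{-1}$, so that
\begin{align*}
{\rm I}=P_{q\triangle h}(B,A),\ q(x)=\frac{2(x-1)}{x+1}\,x^{\alpha};\qquad {\rm V}=P_{v\triangle h}(B,A),\ v(x)=\tfrac12\bigl(x^{\alpha+1}-x^{\alpha-1}\bigr).
\end{align*}
Since $h$ is continuous and positive on $\Idb$, Theorem \ref{mnp} reduces each operator inequality to the corresponding scalar inequality among the generating functions on $\Idb$.

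For (i) the three functions are $r(x)=x^{\alpha}-x^{\alpha-1}=x^{\alpha-1}(x-1)$, the function $q$ above, and $k(x)=x^{\alpha+1}-x^{\alpha}=x^{\alpha-1}\,x(x-1)$. Clearing the positive denominator $x+1$ and the positive factor $x^{\alpha-1}$, I would reduce $r\le q$ to $(x-1)(1-x)\le 0$, i.e. $(x-1)^2\ge 0$, and $q\le k$ to $x(x-1)^2\ge 0$; both are immediate for $x>0$. For (ii) the outer functions are again $r$ and $k$ while the middle one is $v$, and the two differences compute to $v(x)-r(x)=\tfrac12 x^{\alpha-1}(x-1)^2$ and $k(x)-v(x)=\tfrac12 x^{\alpha-1}(x-1)^2$, both nonnegative on $(0,\infty)$. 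In each case Theorem \ref{mnp}, applied with the ordered triple of functions just exhibited, yields the stated operator inequality.

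For (iii) I would apply Theorem \ref{mnp} with the single pair $p(x)=1-x^{-1}$ and $r(x)=x^{\alpha}-x^{\alpha-1}$ (taking the third, largest function equal to $r$); note $p$ is exactly $r$ at $\alpha=0$, so $A\#_{(0,\beta)}B-A\#_{(-1,\beta)}B=P_{p\triangle h}(B,A)$. The desired $p\le r$ rewrites as $r(x)-p(x)=(x-1)x^{-1}(x^{\alpha}-1)\ge 0$, equivalently $(x-1)(x^{\alpha}-1)\ge 0$. This is precisely where the hypothesis $\alpha\ge 0$ enters: for $\alpha\ge 0$ the factors $x-1$ and $x^{\alpha}-1$ share the same sign for every $x>0$, so the product is nonnegative.

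The computations are routine; the only points requiring care are the algebraic identity $1-2(1+X)^{-1}=(X-1)(X+1)^{-1}$ that puts ${\rm I}$ into perspective form, and the verification that $\Idb$ may be taken inside $(0,\infty)$ so that the negative powers are continuous and $h$ is continuous and strictly positive there. These are what allow Theorem \ref{mnp} to be applied uniformly across the C*-algebra, real C*-algebra, and JC-algebra settings, and once it is in force no genuine obstacle remains.
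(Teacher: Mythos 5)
Your proposal is correct and follows essentially the same route as the paper: reduce each inequality to a scalar inequality among generating functions and apply Theorem \ref{mnp} with $h(x)=x^{\beta}$, recognizing ${\rm I}$, ${\rm V}$, and the $(\gamma,\beta)$-geometric means as perspective functions. The only difference is cosmetic --- the paper cites the proof of Proposition 1 in \cite{Nikoufar2020} for the scalar inequalities in (i) and (ii), whereas you verify them directly by elementary factoring, which makes your argument self-contained.
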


\begin{proof}
Here we only give the proof for the case of a unital JC-algebra; 
the proofs for other cases are similar and more straightforward. 

Proof of (i).  
Let 
\begin{align*}
r(x)&= x^{\alpha}-x^{\alpha-1},\\
q(x)&=2\left( 1-\frac{2}{x+1} \right)	x^{\alpha},\\
k(x)&=x^{\alpha+1}-x^{\alpha}.
\end{align*}
Then, 
\begin{equation}
\label{rqk1}
r(x)\leq q(x)\leq k(x)
\end{equation}
for $x>0$ as shown in the proof of  \cite[Proposition 1]{Nikoufar2020}.
%
%

Denoting $h(x)=x^{\beta}$,  we have 
\begin{align*}
P_{r\triangle h}(B, A)&=A\#_{(\alpha, \beta)}B-A\#_{(\alpha-1, \beta)}B \\
P_{q\triangle h}(B, A)&=2A^{\frac{\beta}{2}}
\left[\left(1-2(A^{-\frac{\beta}{2}}B A^{-\frac{\beta}{2}} + 1)^{-1}\right)(A^{-\frac{\beta}{2}}B A^{-\frac{\beta}{2}})^{\alpha}\right]A^{\frac{\beta}{2}}={\rm I}\\
P_{k\triangle h}(B, A)&=A\#_{(\alpha+1, \beta)}B-A\#_{(\alpha, \beta)}B.
\end{align*}
One sees that $P_{r\triangle h}(B, A)$ and $P_{k\triangle h}(B, A)$ are in $\Al.$ Note also that 
$$\left[\left(1-2(A^{-\frac{\beta}{2}}B A^{-\frac{\beta}{2}} + 1)^{-1}\right)(A^{-\frac{\beta}{2}}B A^{-\frac{\beta}{2}})^{\alpha}\right]$$
is an element in the associative JC-subalgebra generated by $A^{-\frac{\beta}{2}}B A^{-\frac{\beta}{2}}$ and $1.$ Hence, 
$$2A^{\frac{\beta}{2}}
\left[\left(1-2(A^{-\frac{\beta}{2}}B A^{-\frac{\beta}{2}} + 1 )^{-1}\right)(A^{-\frac{\beta}{2}}B A^{-\frac{\beta}{2}})^{\alpha}\right]A^{\frac{\beta}{2}}\in\Al.$$
Applying Theorem \ref{mnp} to the inequalities (\ref{rqk1}), we obtain that  
$$A\#_{(\alpha, \beta)}B-A\#_{(\alpha-1, \beta)}B\leq 
{\rm I}\leq A\#_{(\alpha+1, \beta)}B-A\#_{(\alpha, \beta)}B, $$
proving (i). 

For (ii), the inequalities 
\begin{align} \label{rqk2}
 x^{\alpha}-x^{\alpha-1}\leq \frac{1}{2}(x^{\alpha+1}-x^{\alpha-1})\leq  x^{\alpha+1}-x^{\alpha}
\end{align}
hold for all $x>0,$ which can be found in the proof of  \cite[Proposition 1]{Nikoufar2020}. 
Using the perspective functions associated with the three functions in (\ref{rqk2}) with $h$ as in 
the proof of (i) and applying Theorem \ref{mnp}, (ii) follows.

(iii) Note that $1-\frac{1}{x}\leq x^{\alpha}-x^{\alpha-1}$ for all $x>0.$ Applying Theorem \ref{mnp} to this inequality with $h(x)=x^{\beta},$ we know (iii) is true. 
\end{proof}

It is well known that the following Hermite-Hadamard integral inequality, which motivated  
Choquet's theory, plays a significant role in the theory of convex functions, optimization theory and engineering \cite{niculescu2018convex}.  
\begin{lemma}\label{HHI}
	Let $f$ be a convex function on the interval $[a, b].$ Then,
	\begin{align*}
	f\left(\frac{a+b}{2}\right)\leq \frac{1}{b-a}\int_a^bf(t)dt\leq \frac{f(a)+f(b)}{2}.	
	\end{align*}
\end{lemma}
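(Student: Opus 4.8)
The statement is the classical Hermite--Hadamard inequality, and the plan is to prove the two one-sided estimates separately, each as a direct consequence of convexity followed by integration. Since $f$ is convex on $[a,b]$ it is continuous on the open interval and bounded on $[a,b]$, hence Riemann integrable, so the integral $\int_a^b f(t)\,dt$ is well defined and all the manipulations below are legitimate; this is the only regularity point that needs a remark, and it is standard.

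For the upper bound I would exploit the secant line joining the endpoints. Writing a point of $[a,b]$ as $t=(1-\lambda)a+\lambda b$ with $\lambda=(t-a)/(b-a)\in[0,1]$, convexity gives $f(t)\le (1-\lambda)f(a)+\lambda f(b)=\frac{b-t}{b-a}f(a)+\frac{t-a}{b-a}f(b)$. Integrating this pointwise inequality over $[a,b]$ and using $\int_a^b (b-t)\,dt=\int_a^b (t-a)\,dt=\tfrac12(b-a)^2$ yields $\int_a^b f(t)\,dt\le \tfrac12(b-a)\big(f(a)+f(b)\big)$, which is exactly the right-hand inequality after dividing by $b-a$.

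For the lower bound I would use the reflection symmetry of the interval about its midpoint. For each $t\in[a,b]$ the point $a+b-t$ also lies in $[a,b]$ and their average is $(a+b)/2$, so convexity gives $f\big(\tfrac{a+b}{2}\big)\le \tfrac12\big(f(t)+f(a+b-t)\big)$. Integrating over $[a,b]$ and applying the substitution $u=a+b-t$ to the second term, which shows $\int_a^b f(a+b-t)\,dt=\int_a^b f(u)\,du$, gives $(b-a)f\big(\tfrac{a+b}{2}\big)\le \int_a^b f(t)\,dt$, the left-hand inequality. Alternatively one could invoke a supporting line of $f$ at the midpoint and integrate, the linear part cancelling by symmetry, but I prefer the reflection argument since it avoids appealing to the subdifferential. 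There is no serious obstacle here: both halves are immediate once convexity is applied at the right pair of points, and the only thing to keep straight is the integrability of $f$ together with the fact that the two elementary integrals $\int_a^b(b-t)\,dt$ and $\int_a^b(t-a)\,dt$ coincide, which is precisely what produces the symmetric combination $\tfrac12(f(a)+f(b))$.
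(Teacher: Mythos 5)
Your proof is correct, but there is nothing in the paper to compare it against: the paper states this lemma as the classical Hermite--Hadamard inequality and cites it to the literature (Niculescu--Persson) without giving any proof. Your argument is the standard self-contained one and both halves are sound: the secant-line estimate $f(t)\le \frac{b-t}{b-a}f(a)+\frac{t-a}{b-a}f(b)$ integrates to the right-hand inequality, and the reflection argument $f\bigl(\tfrac{a+b}{2}\bigr)\le \tfrac12\bigl(f(t)+f(a+b-t)\bigr)$ together with the substitution $u=a+b-t$ gives the left-hand one. Your remark on integrability is also the right thing to flag: a convex function on $[a,b]$ is bounded (above by $\max\{f(a),f(b)\}$ via convexity, below by comparison with a chord) and continuous on the open interval, hence Riemann integrable, so the pointwise inequalities may legitimately be integrated. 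In short, the proposal supplies a complete elementary proof of a result the paper simply imports as known.
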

Nowadays much attention is paid to generalize and extend this classical fundamental result for convex function. For example, El Farissi gives the following improved estimation in \cite{el2010simple}.
\begin{lemma}\label{RHHI}
	Assume that $f: [a, b]\to \Rdb$ is a convex function. Then we have
	\begin{align*}
	f\left(\frac{a+b}{2}\right)\leq \sup_{\lambda\in[0, 1]}l(\lambda)\leq \frac{1}{b-a}\int_a^bf(t)dt\leq \inf_{\lambda\in[0, 1]}L(\lambda) \leq \frac{f(a)+f(b)}{2},	\end{align*}
	where 
	\begin{align*}
	l(\lambda)= \lambda  f\left( \frac{ \lambda b+(2-\lambda)a}{2}\right)+(1-\lambda) f\left(\frac{(1+\lambda) b+(1-\lambda)a}{2}\right)	
	\end{align*}
	and
	\begin{align*}
	\displaystyle 
	L(\lambda)= \dfrac{1}{2} \left[ f( \lambda b+(1-\lambda)a)+\lambda f(a)+(1-\lambda)f(b)\right]. 
	\end{align*}
\end{lemma}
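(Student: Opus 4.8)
The plan is to unfold the five-term chain into four consecutive two-sided estimates and to prove all of them with a single device. For each $\lambda\in[0,1]$ I would introduce the interior point $c_\lambda=\lambda b+(1-\lambda)a$, split the integral as $\int_a^b f=\int_a^{c_\lambda}f+\int_{c_\lambda}^b f$, and apply the classical Hermite--Hadamard inequality (Lemma~\ref{HHI}) separately on the subintervals $[a,c_\lambda]$ and $[c_\lambda,b]$. The only arithmetic to keep track of is that $c_\lambda-a=\lambda(b-a)$ and $b-c_\lambda=(1-\lambda)(b-a)$, so the two subinterval lengths carry exactly the weights $\lambda$ and $1-\lambda$ that appear in $l(\lambda)$ and $L(\lambda)$.

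For the two right-hand inequalities I would use the upper (trapezoidal) bound of Lemma~\ref{HHI} on each piece, namely $\int_a^{c_\lambda}f\le(c_\lambda-a)\frac{f(a)+f(c_\lambda)}{2}$ and $\int_{c_\lambda}^b f\le(b-c_\lambda)\frac{f(c_\lambda)+f(b)}{2}$. Summing and dividing by $b-a$, the two $f(c_\lambda)$ contributions merge into a single $f(c_\lambda)$ while the endpoint terms collect with weights $\lambda$ and $1-\lambda$, giving precisely $\frac{1}{b-a}\int_a^b f\le\frac12\bigl[f(c_\lambda)+\lambda f(a)+(1-\lambda)f(b)\bigr]=L(\lambda)$. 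As this holds for every $\lambda$, it yields $\frac{1}{b-a}\int_a^b f\le\inf_{\lambda}L(\lambda)$; and the outermost inequality $\inf_\lambda L(\lambda)\le\frac{f(a)+f(b)}{2}$ is then immediate since $L(0)=L(1)=\frac{f(a)+f(b)}{2}$.

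For the two left-hand inequalities I would instead use the lower (midpoint) bound of Lemma~\ref{HHI} on each piece, namely $\int_a^{c_\lambda}f\ge(c_\lambda-a)\,f\!\left(\frac{a+c_\lambda}{2}\right)$ and $\int_{c_\lambda}^b f\ge(b-c_\lambda)\,f\!\left(\frac{c_\lambda+b}{2}\right)$. The key bookkeeping is to recognize the two subinterval midpoints as the arguments occurring in $l$: a one-line computation gives $\frac{a+c_\lambda}{2}=\frac{(2-\lambda)a+\lambda b}{2}$ and $\frac{c_\lambda+b}{2}=\frac{(1-\lambda)a+(1+\lambda)b}{2}$. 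After dividing by $b-a$ and inserting the weights $\lambda,1-\lambda$, the sum becomes exactly $l(\lambda)$, so $l(\lambda)\le\frac{1}{b-a}\int_a^b f$ for all $\lambda$ and hence $\sup_\lambda l(\lambda)\le\frac{1}{b-a}\int_a^b f$. The innermost left inequality follows because $l(1)=f\!\left(\frac{a+b}{2}\right)$, so $f\!\left(\frac{a+b}{2}\right)\le\sup_\lambda l(\lambda)$.

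I expect no genuine analytic obstacle here: convexity on $[a,b]$ guarantees continuity on the open interval and thus integrability, and each step is a direct invocation of the classical two-sided estimate of Lemma~\ref{HHI} on a subinterval. The only delicate point is the \emph{elementary} algebra that identifies the midpoints $\frac{a+c_\lambda}{2}$, $\frac{c_\lambda+b}{2}$ and verifies that the weighted combinations telescope to exactly $l(\lambda)$ and $L(\lambda)$; this is routine but is precisely where a weight or sign slip could hide, so I would carry out the splitting computation explicitly once and then read off the four supremum/infimum/endpoint statements.
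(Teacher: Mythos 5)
Your proof is correct. The paper itself does not prove Lemma \ref{RHHI}; it quotes the result from El Farissi \cite{el2010simple}, and your argument---splitting $[a,b]$ at $c_\lambda=\lambda b+(1-\lambda)a$ and applying the classical Hermite--Hadamard inequality (Lemma \ref{HHI}) on each subinterval, with the lengths $\lambda(b-a)$ and $(1-\lambda)(b-a)$ supplying the weights---is exactly the standard proof given in that reference for the two middle inequalities. The only divergence is at the outer ends of the chain: El Farissi proves the stronger pointwise statements that $f\left(\frac{a+b}{2}\right)\le l(\lambda)$ and $L(\lambda)\le\frac{f(a)+f(b)}{2}$ hold for \emph{every} $\lambda\in[0,1]$ (the first by Jensen, since the two arguments of $f$ in $l(\lambda)$ average, with weights $\lambda$ and $1-\lambda$, exactly to $\frac{a+b}{2}$; the second by bounding $f(\lambda b+(1-\lambda)a)$ by the chord), whereas you obtain the sup/inf versions more cheaply from the endpoint evaluations $l(1)=f\left(\frac{a+b}{2}\right)$ and $L(0)=\frac{f(a)+f(b)}{2}$; both suffice for the lemma as stated. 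One pedantic point worth half a sentence in a final write-up: for $\lambda\in\{0,1\}$ one subinterval degenerates to a point, so either treat those two values directly (the claims there reduce to Lemma \ref{HHI} itself) or note that the length-weighted subinterval estimates hold trivially for a degenerate interval.
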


Theorem 2 in \cite{Nikoufar2020} states that for $\alpha\geq 0$, $\beta>0,$ and 
positive invertible operators $A$ and $B$ in $B(H)$ with $A^{\beta}\leq B,$ 
\begin{align}\label{N20T2}
{\rm I}\leq S_{\alpha, \beta}(A|B) \leq {\rm V}.	
\end{align}

The following result refines (\ref{N20T2}) and extends it to our broader settings.

\begin{theorem}\label{ulbroe}
Let $\alpha\geq 0,$ $\beta>0,$
and let $\Al,$ be a $C^*$-algebra or a real $C^*$-algebra or a JC-algebra, with a unit. 	
If $A$ and $B$ are two positive invertible elements in  $\Al$ satisfying $A^{\beta}\leq B,$  then
\begin{align}
\label{main1}
{\rm I}\leq {\rm II}\leq  S_{\alpha, \beta}(A|B)\leq {\rm III}\leq {\rm V}.
\end{align}
\end{theorem}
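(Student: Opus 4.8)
The plan is to reduce the entire chain to four scalar inequalities and then invoke Theorem \ref{mnp}. Set $h(x)=x^{\beta}$ and $X=A^{-\beta/2}BA^{-\beta/2}$, so that $P_{f\triangle h}(B,A)=A^{\beta/2}f(X)A^{\beta/2}$ for every continuous $f$. First I would record that the hypothesis $A^{\beta}\leq B$ forces the spectrum of $X$ into $[1,\infty)$: applying the order-preserving quadratic map $C\mapsto A^{-\beta/2}CA^{-\beta/2}$ (which in the JC-case is the quadratic representation $U_{A^{-\beta/2}}$, a positive map) to $A^{\beta}\leq B$ yields $1=A^{-\beta/2}A^{\beta}A^{-\beta/2}\leq X$, the left side being computed inside the associative subalgebra generated by $A,1$. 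Since the three functional calculi are order preserving, it therefore suffices to prove each comparison for real arguments $x\geq 1$.

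Next I would identify each term as a perspective function $P_{f\triangle h}(B,A)$ for an explicit $f$: writing $r_1(x)=\frac{2(x-1)}{x+1}x^{\alpha}$ for $\mathrm{I}$ (after simplifying $2(1-2(1+x)^{-1})=\frac{2(x-1)}{x+1}$), $r_2(x)=\frac{4(\sqrt{x}-1)}{\sqrt{x}+1}x^{\alpha}$ for $\mathrm{II}$ (after simplifying $4-8(\sqrt{x}+1)^{-1}$), $s(x)=x^{\alpha}\log x$ for $S_{\alpha,\beta}(A|B)$, $r_3(x)=x^{\alpha+1/2}-x^{\alpha-1/2}$ for $\mathrm{III}$, and $r_5(x)=\tfrac12(x^{\alpha+1}-x^{\alpha-1})$ for $\mathrm{V}$. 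The theorem then reduces to
\[
r_1(x)\leq r_2(x)\leq s(x)\leq r_3(x)\leq r_5(x),\qquad x\geq 1,
\]
after which two applications of Theorem \ref{mnp}, to the triples $(r_1,r_2,s)$ and $(s,r_3,r_5)$ on an interval $\Idb\subset(0,\infty)$ containing the spectra of $A$ and $X$, yield (\ref{main1}).

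The two middle inequalities are where Hermite--Hadamard (Lemma \ref{HHI}) enters. Factoring out the positive scalar $x^{\alpha}$, the inequality $s\leq r_3$ becomes $x^{\alpha}\log x\leq x^{\alpha+1/2}-x^{\alpha-1/2}$, which is the left-hand bound $f(\frac{a+b}{2})\leq\frac1{b-a}\int_a^b f$ applied to the convex function $t\mapsto x^{t}$ on $[\alpha-\tfrac12,\alpha+\tfrac12]$, since $\int_{\alpha-1/2}^{\alpha+1/2}x^{t}\,dt=\frac{x^{\alpha+1/2}-x^{\alpha-1/2}}{\log x}$ and multiplication by $\log x\geq 0$ is legitimate for $x\geq 1$. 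For $r_2\leq s$ I would apply the same left-hand bound to the convex function $t\mapsto 1/t$ on $[1,\sqrt{x}]$, obtaining $\frac{2(\sqrt{x}-1)}{\sqrt{x}+1}\leq\log\sqrt{x}$; doubling gives $\frac{4(\sqrt{x}-1)}{\sqrt{x}+1}\leq\log x$, i.e. $r_2\leq s$ after reinserting $x^{\alpha}$.

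The two outer inequalities are elementary. Substituting $u=\sqrt{x}\geq 1$ and cancelling common positive factors, $r_1\leq r_2$ reduces to $\frac{u^2-1}{u^2+1}\leq\frac{2(u-1)}{u+1}$, equivalently $(u+1)^2\leq 2(u^2+1)$, and $r_3\leq r_5$ reduces to $u\leq\tfrac12(u^2+1)$; both are restatements of $(u-1)^2\geq 0$, with equality at $u=1$. I expect the main obstacle to be bookkeeping rather than conceptual: correctly matching each operator expression $\mathrm{I}$--$\mathrm{V}$ with its scalar symbol, verifying the two algebraic simplifications, and checking that all five functions are continuous on an interval where $h=x^{\beta}>0$, so that Theorem \ref{mnp} applies verbatim in the $C^*$, real $C^*$, and JC settings simultaneously.
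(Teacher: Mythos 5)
Your proof is correct, and its operator-theoretic skeleton --- identifying ${\rm I}, {\rm II}, S_{\alpha,\beta}(A|B), {\rm III}, {\rm V}$ with the perspective functions $P_{f\triangle h}(B,A)$ of five scalar functions with $h(t)=t^{\beta}$, noting that $A^{\beta}\leq B$ puts the spectrum of $A^{-\beta/2}BA^{-\beta/2}$ inside $[1,\infty)$, and then invoking Theorem \ref{mnp} --- is exactly the paper's; your five scalar functions coincide with the paper's $r,s,q,j,k$ after the algebraic simplifications you indicate. Where you genuinely diverge is in how the scalar chain on $[1,\infty)$ is established. The paper gets all four inequalities in a single stroke: it applies El Farissi's \emph{refined} Hermite--Hadamard inequality (Lemma \ref{RHHI}) to the one convex function $f(t)=x^{\alpha}/t-1$ on $[1,x]$, computes $\sup_{\lambda}l(\lambda)$ and $\inf_{\lambda}L(\lambda)$ (both attained at $\lambda=1/(\sqrt{x}+1)$) via the extreme value theorem, and observes that, after multiplying every term by $x-1\geq 0$ and adding $x-1$, the five quantities in that chain are precisely $r,s,q,j,k$. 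You instead use only the \emph{classical} Hermite--Hadamard inequality (Lemma \ref{HHI}), applied twice to two different convex functions --- $t\mapsto 1/t$ on $[1,\sqrt{x}]$ for $r_2\leq s$, and $t\mapsto x^{t}$ on $[\alpha-\frac{1}{2},\alpha+\frac{1}{2}]$ for $s\leq r_3$ --- and dispose of the two outer inequalities by the elementary $(u-1)^2\geq 0$ with $u=\sqrt{x}$. Your route is more elementary and self-contained (Lemma \ref{RHHI} and the optimization over $\lambda$ are never needed), at the cost of four separate verifications; the paper's route is more unified and explains \emph{why} the intermediate bounds ${\rm II}$ and ${\rm III}$ are the natural refining terms --- they are exactly the optimized El Farissi bounds sitting between the midpoint value and the trapezoidal value, which is what makes the sharpening of Nikoufar's inequality (\ref{N20T2}) transparent.
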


\begin{proof}
For $\alpha\geq 0$ and $x\geq 1,$ 
define $f(t)=\dfrac{x^{\alpha}}{t}-1$, $1 \leq t \leq x$.
 Then $l(\lambda)$ and $L(\lambda)$ defined in Lemma \ref{RHHI} are
\begin{align*}
l(\lambda)&= x^{\alpha} \dfrac{2\lambda}{\lambda(x-1)+2}+	x^{\alpha} \dfrac{2-2\lambda}{\lambda(x-1)+(x+1)}-1;\\
L(\lambda)&=\dfrac{1}{2}\left( \dfrac{x^{\alpha}}{\lambda(x-1)+1}+\lambda x^{\alpha} +
(1-\lambda)x^{\alpha -1} \right)-1.
\end{align*}
Applying refined Hermite-Hadamard inequality (see Lemma \ref{RHHI}) for $f$ on the interval $[1,x],$ we get
\begin{align*}
f\left(\dfrac{x+1}{2}\right)\leq \sup_{\lambda\in[0,1]}l(\lambda)\leq \dfrac{1}{x-1}\int_1^x \left( \dfrac{x^{\alpha}}{t}-1 \right) dt\leq \inf_{\lambda\in[0,1]}L(\lambda)\leq \dfrac{f(x)+f(1)}{2}.
\end{align*}

By extreme value theorem, it is easy to show that 
\begin{align*}
\sup_{\lambda\in[0,1]}l(\lambda)&=l\left(\frac{1}{\sqrt{x}+1}\right)=\dfrac{4x^{\alpha}}{(\sqrt{x}+1)^2}-1,\\
\inf_{\lambda\in[0,1]}L(\lambda)&=L\left( \frac{1}{\sqrt{x}+1} \right)=\dfrac{x^{\alpha}}{\sqrt{x}}-1.\\
\end{align*}
And, a simple calculation shows that 
\begin{align*}
f\left( \dfrac{x+1}{2} \right) &=\dfrac{2x^{\alpha}}{x+1}-1,\\
\int_1^x \left( \dfrac{x^{\alpha}}{t}-1 \right)dt&=x^{\alpha}\ln x-(x-1),\\	
\dfrac{f(x)+f(1)}{2}&=\dfrac{1}{2} \left( x^{\alpha}+x^{\alpha-1} \right)-1.	
\end{align*}
Therefore, 
\begin{align*} 
\dfrac{2x^{\alpha}}{x+1}-1&\leq \dfrac{4x^{\alpha}}{(\sqrt{x}+1)^2}-1
\leq \dfrac{1}{x-1} \left[ x^{\alpha}\ln x-(x-1) \right]\\
&\leq \dfrac{x^{\alpha}}{\sqrt{x}}-1
\leq \frac{1}{2}(x^{\alpha}+x^{\alpha-1})-1. 
\end{align*}
Let 
\begin{align*}
r(x)&= 2 \left( 1-\dfrac{2}{x+1} \right) x^{\alpha},\\
s(x)&=4x^{\alpha}-\dfrac{8x^{\alpha}}{\sqrt{x}+1},\\
q(x)&=x^{\alpha}\ln(x),\\
j(x)&=\dfrac{x^{\alpha}(x-1)}{\sqrt{x}},\\
k(x)&=\dfrac{1}{2} \left( x^{\alpha+1}-x^{\alpha-1} \right).
\end{align*}
Then 
\begin{equation}
\label{rsqjk1}
r(x)\leq s(x)\leq q(x)\leq j(x)\leq k(x).
\end{equation}

Denoting $h(t)=t^{\beta}$,  we have 
\begin{align*}
P_{r\triangle h}(B, A)&= 2A^{\frac{\beta}{2}}\big(1-2(1+A^{-\frac{\beta}{2}}B A^{-\frac{\beta}{2}}\big)^{-1})(A^{-\frac{\beta}{2}}B A^{-\frac{\beta}{2}})^{\alpha}A^{\frac{\beta}{2}}={\rm I}\\
P_{s\triangle h}(B, A)&=4A\#_{(\alpha, \beta)}B-8A^{\frac{\beta}{2}}[(A^{-\frac{\beta}{2}}B A^{-\frac{\beta}{2}})^{\alpha}\big( (A^{-\frac{\beta}{2}}B A^{-\frac{\beta}{2}})^{\frac{1}{2}}+1 \big)^{-1}]A^{\frac{\beta}{2}}={\rm II}	\\
P_{q\triangle h}(B, A)&=S_{\alpha,\beta}(A|B)\\
P_{j\triangle h}(B, A)&=A_{(\alpha+\frac{1}{2}, \beta)}B-A_{(\alpha-\frac{1}{2}, \beta)}B={\rm III}\\
P_{k\triangle h}(B, A)&=\dfrac{1}{2}(A\#_{(\alpha+1, \beta)}B-A\#_{(\alpha-1, \beta)}B)={\rm V}.
\end{align*}
Applying Theorem \ref{mnp} to (\ref{rsqjk1}), we derive the inequality
\begin{align*}
 {\rm I}\leq {\rm II}\leq  S_{\alpha, \beta}(A|B)\leq {\rm III}\leq {\rm V}.
\end{align*} 
\end{proof}
Coupling Theorem \ref{ulbroe} with  Proposition \ref{orlurnp}, we obtain 
\begin{corollary}\label{crjropi}
If $A$ and $B$ are two positive invertible elements in $\Al$, with $A^{\beta}\leq B,$ and $\alpha\geq 0,$ $\beta>0,$ then 
\begin{align*}
A\#_{(\alpha, \beta)}B-A\#_{(\alpha-1, \beta)}B
&\leq{\rm I}
\leq {\rm II}\\
&\leq  S_{\alpha, \beta}(A|B)\\
&\leq {\rm III}\leq {\rm V}\\
&\leq A\#_{(\alpha+1, \beta)}B-A\#_{(\alpha, \beta)}B.
\end{align*}
\end{corollary}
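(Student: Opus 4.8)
The plan is to obtain the displayed chain purely by \emph{concatenating} the two results already in hand, with no fresh functional calculus required. The interior chain
\[
{\rm I}\leq {\rm II}\leq S_{\alpha, \beta}(A|B)\leq {\rm III}\leq {\rm V}
\]
is exactly the conclusion of Theorem \ref{ulbroe}, valid under the stated hypotheses $\alpha\geq 0$, $\beta>0$, and $A^{\beta}\leq B$. It therefore only remains to attach a lower bound beneath ${\rm I}$ and an upper bound above ${\rm V}$, and these are supplied verbatim by Proposition \ref{orlurnp}.

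Concretely, first I would read off the left-hand inequality of Proposition \ref{orlurnp}(i),
\[
A\#_{(\alpha, \beta)}B - A\#_{(\alpha-1, \beta)}B \leq {\rm I},
\]
which furnishes the lower endpoint of the whole chain; then I would read off the right-hand inequality of Proposition \ref{orlurnp}(ii),
\[
{\rm V}\leq A\#_{(\alpha+1, \beta)}B - A\#_{(\alpha, \beta)}B,
\]
which furnishes the upper endpoint. Note that Proposition \ref{orlurnp} requires only $\alpha\geq 0$ and $\beta>0$, so the corollary's extra hypothesis $A^{\beta}\leq B$ (needed solely for Theorem \ref{ulbroe}) causes no compatibility problem; both ingredients apply simultaneously.

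Finally I would assemble the five inequalities into the chain displayed in the corollary. This is legitimate because the relation $\leq$ on $\Al$ is the partial order inherited from the positive cone of $B(H)_{sa}$ --- and, in the JC-algebra case, all the elements ${\rm I}$, ${\rm II}$, ${\rm III}$, ${\rm V}$, and $S_{\alpha,\beta}(A|B)$ genuinely lie in $\Al$ by the functional-calculus remarks following (\ref{groe2}) --- so $\leq$ is transitive and the two one-sided bounds may be prepended and appended to the Theorem \ref{ulbroe} chain. Since every step is a direct citation, there is no substantive obstacle; the only point worth a moment's care is to pair the lower bound with ${\rm I}$ (via part (i)) and the upper bound with ${\rm V}$ (via part (ii)), these being precisely the two ends of the interior chain.
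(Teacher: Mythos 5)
Your proposal is correct and matches the paper's own argument exactly: the paper proves this corollary simply by ``coupling Theorem \ref{ulbroe} with Proposition \ref{orlurnp},'' i.e.\ attaching the lower bound from Proposition \ref{orlurnp}(i) beneath ${\rm I}$ and the upper bound from Proposition \ref{orlurnp}(ii) above ${\rm V}$, just as you do. Your added remarks on the hypotheses and on transitivity of the order in $\Al$ are accurate but not needed beyond the citation.
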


Corollary 5 in \cite{fujii1989uhlmann} states that for
positive invertible operators $A$ and $B$ in $B(H)$,  
\begin{align}\label{oropi}
A-AB^{-1}A\leq S(A\vert B)\leq B-A.
\end{align}
Under the assumption $A\leq B$,  
\cite{Nikoufar2020} provides the following sharper result for the relative operator entropy: 
\begin{align}\label{oropi1}
A-AB^{-1}A &\leq 2(A-2A(A+B)^{-1}A) \leq S(A\vert B)  \nonumber \\
&\leq \frac{1}{2} \left(B- AB^{-1}A \right) \leq B-A.	
\end{align}
The following result refines (\ref{oropi}), (\ref{oropi1}), and extends them to our settings.
\begin{corollary}\label{joropi}
If $A$ and $B$ are two positive invertible elements in $\Al$, with $A\leq B,$ then 
\begin{align*}
A-AB^{-1}A
&\leq 2(A-2A(A+B)^{-1}A)\\
&\leq 4A-8A(A\#_{\frac{1}{2}}B+A)^{-1}A\\
&\leq S(A|B)\\
&\leq A\#_{\frac{1}{2}}B-A\#_{-\frac{1}{2}}B\\
&\leq \frac{1}{2}(B-AB^{-1}A)\\
&\leq B-A.
\end{align*}
\end{corollary}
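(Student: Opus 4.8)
The plan is to derive Corollary~\ref{joropi} as the special case $\alpha=0$, $\beta=1$ of Corollary~\ref{crjropi}. First I would observe that with $\beta=1$ the standing hypothesis $A^{\beta}\leq B$ of Corollary~\ref{crjropi} reads exactly $A\leq B$, which is the hypothesis here, and that $\alpha=0\geq 0$, $\beta=1>0$ meet its constraints; so Corollary~\ref{crjropi} applies verbatim and it remains only to rewrite each of the six quantities occurring there in the concrete form asserted. Setting $X=A^{-\frac{1}{2}}BA^{-\frac{1}{2}}$, the geometric-mean terms collapse at once: $A\#_{(0,1)}B=A$, $A\#_{(1,1)}B=B$, $A\#_{(-1,1)}B=AB^{-1}A$, $A\#_{(\frac{1}{2},1)}B=A\#_{\frac{1}{2}}B$, and $A\#_{(-\frac{1}{2},1)}B=A\#_{-\frac{1}{2}}B$. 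Consequently the two outer endpoints of Corollary~\ref{crjropi} become $A\#_{(0,1)}B-A\#_{(-1,1)}B=A-AB^{-1}A$ and $A\#_{(1,1)}B-A\#_{(0,1)}B=B-A$, while $S_{0,1}(A|B)=S(A|B)$, ${\rm III}=A\#_{\frac{1}{2}}B-A\#_{-\frac{1}{2}}B$, and ${\rm V}=\frac{1}{2}(B-AB^{-1}A)$.

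The only genuine computation is the identification of ${\rm I}$ and ${\rm II}$. For ${\rm I}$ I would use the congruence identity $1+X=A^{-\frac{1}{2}}(A+B)A^{-\frac{1}{2}}$, whence $(1+X)^{-1}=A^{\frac{1}{2}}(A+B)^{-1}A^{\frac{1}{2}}$ and therefore $A^{\frac{1}{2}}(1+X)^{-1}A^{\frac{1}{2}}=A(A+B)^{-1}A$; substituting into (\ref{I}) with $\alpha=0$, $\beta=1$ gives ${\rm I}=2A-4A(A+B)^{-1}A=2(A-2A(A+B)^{-1}A)$. For ${\rm II}$ the same device applied to $X^{\frac{1}{2}}$ gives $X^{\frac{1}{2}}+1=A^{-\frac{1}{2}}(A\#_{\frac{1}{2}}B+A)A^{-\frac{1}{2}}$, so that $A^{\frac{1}{2}}(X^{\frac{1}{2}}+1)^{-1}A^{\frac{1}{2}}=A(A\#_{\frac{1}{2}}B+A)^{-1}A$, and (\ref{II}) becomes ${\rm II}=4A-8A(A\#_{\frac{1}{2}}B+A)^{-1}A$.

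The one point needing care, and the only place where the JC-algebra setting intervenes, is that these conjugation identities are carried out inside the associative Jordan subalgebra generated by $X$ and $1$, exactly as in the proof of Proposition~\ref{orlurnp}; there all products, powers, and inverses behave associatively, so the manipulations are legitimate and the resulting elements lie in $\Al$. Once ${\rm I}$ through ${\rm V}$ and the two endpoints are rewritten as above, the chain of inequalities furnished by Corollary~\ref{crjropi} is literally the asserted chain, completing the proof. I expect no substantive obstacle here: the entire content is the bookkeeping of functional calculus, and the monotonicity of the chain is inherited wholesale from Corollary~\ref{crjropi} (hence ultimately from Theorem~\ref{mnp} and the refined Hermite--Hadamard inequality of Lemma~\ref{RHHI}).
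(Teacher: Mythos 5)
Your proposal is correct and is exactly the argument the paper intends: Corollary~\ref{joropi} is the specialization $\alpha=0$, $\beta=1$ of Corollary~\ref{crjropi}, and your congruence computations identifying ${\rm I}=2(A-2A(A+B)^{-1}A)$ and ${\rm II}=4A-8A(A\#_{\frac{1}{2}}B+A)^{-1}A$ (together with the collapse of the geometric-mean terms) are precisely the bookkeeping the paper leaves implicit. The only cosmetic quibble is that the identity $1+X=A^{-\frac{1}{2}}(A+B)A^{-\frac{1}{2}}$ involves $A$ and $B$, not just the subalgebra generated by $X$ and $1$; the clean justification in the JC-case is to compute in the ambient $B(H)$ and note the results lie in $\Al$ by the identity (\ref{ija}) and functional calculus, as the paper does elsewhere.
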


\begin{theorem}\label{ulbroe1}
If $A$ and $B$ are two positive invertible elements in $\Al$ with $A^{\beta}\geq B,$ and $\alpha\geq 0,$ $\beta>0,$ then 
\begin{align}
\label{main2}
{\rm V}\leq {\rm III}\leq  S_{\alpha, \beta}(A|B)\leq {\rm II}\leq {\rm I}.
\end{align}
\end{theorem}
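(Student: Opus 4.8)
The plan is to mirror the proof of Theorem \ref{ulbroe}, changing only one sign that is forced by the opposite hypothesis. The five scalar functions $r, s, q, j, k$ there each factor as $(x-1)$ times a strictly positive function, and the inequalities between those positive factors hold for \emph{every} $x > 0$; hence the chain $r(x) \leq s(x) \leq q(x) \leq j(x) \leq k(x)$ of (\ref{rsqjk1}) is correct exactly when $x - 1 \geq 0$, and it reverses to $k(x) \leq j(x) \leq q(x) \leq s(x) \leq r(x)$ when $x - 1 \leq 0$. The hypothesis $A^{\beta} \geq B$ puts us in the latter regime.

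Concretely, for $\alpha \geq 0$ and a fixed $0 < x \leq 1$ I would apply the refined Hermite--Hadamard inequality (Lemma \ref{RHHI}) to the convex function $f(t) = \frac{x^{\alpha}}{t} - 1$ on the interval $[x, 1]$ rather than on $[1, x]$. The midpoint value, the integral average, and the endpoint average are symmetric in the two endpoints, so they reproduce $\frac{2x^{\alpha}}{x+1} - 1$, $\frac{x^{\alpha}\ln x}{x-1} - 1$, and $\frac{1}{2}(x^{\alpha} + x^{\alpha-1}) - 1$ verbatim. For the two refined bounds the extremum now occurs at $\lambda^{*} = \frac{\sqrt{x}}{1+\sqrt{x}}$, and a short computation gives $\sup_{\lambda} l(\lambda) = \frac{4x^{\alpha}}{(\sqrt{x}+1)^{2}} - 1$ and $\inf_{\lambda} L(\lambda) = \frac{x^{\alpha}}{\sqrt{x}} - 1$, the same two values as before. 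Since $r, s, q, j, k$ are recovered from these five bounds by deleting the constant $-1$ and multiplying by $(x-1)$, and here $x - 1 \leq 0$, the chain reverses and I obtain $k(x) \leq j(x) \leq q(x) \leq s(x) \leq r(x)$ for all $0 < x \leq 1$.

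To finish, I note that $A^{\beta} \geq B$ gives $A^{-\frac{\beta}{2}} B A^{-\frac{\beta}{2}} \leq 1$, so the spectrum of $A^{-\frac{\beta}{2}} B A^{-\frac{\beta}{2}}$ lies in a closed subinterval of $(0, 1]$. Applying Theorem \ref{mnp} to the reversed scalar chain with $h(t) = t^{\beta}$, and using the same identifications $P_{r\triangle h}(B,A) = {\rm I}$, $P_{s\triangle h}(B,A) = {\rm II}$, $P_{q\triangle h}(B,A) = S_{\alpha,\beta}(A|B)$, $P_{j\triangle h}(B,A) = {\rm III}$, $P_{k\triangle h}(B,A) = {\rm V}$ as in Theorem \ref{ulbroe}, yields precisely ${\rm V} \leq {\rm III} \leq S_{\alpha,\beta}(A|B) \leq {\rm II} \leq {\rm I}$.

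The step I expect to be the main obstacle is the claim in the second paragraph that the El Farissi refinement returns the same two bound values on $[x,1]$ as on $[1,x]$. Because $l(\lambda)$ and $L(\lambda)$ are not symmetric under interchanging the endpoints, the earlier computation cannot simply be cited and the optimizing parameter genuinely shifts to $\frac{\sqrt{x}}{1+\sqrt{x}}$. The cleanest safeguard is to bypass the refinement altogether and verify the four scalar inequalities between the positive factors directly for all $x > 0$: two of them collapse to $(\sqrt{x}-1)^{2} \geq 0$, and the logarithmic comparisons follow from a single monotonicity argument (for instance $\ln x - \frac{2(x-1)}{x+1}$ has derivative $\frac{(x-1)^{2}}{x(x+1)^{2}} \geq 0$ and vanishes at $x=1$, so it is $\geq 0$ for $x \geq 1$ and $\leq 0$ for $x \leq 1$). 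Once the scalar chain is established for all $x>0$, only the sign of $x-1$ separates this theorem from Theorem \ref{ulbroe}.
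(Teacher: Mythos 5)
Your proposal is correct and follows essentially the same route as the paper's own proof: the paper likewise applies the refined Hermite--Hadamard inequality (Lemma \ref{RHHI}) to $f(t)=\frac{x^{\alpha}}{t}-1$ on $[x,1]$ for $0<x\leq 1$, finds the optimizer $\lambda=\frac{\sqrt{x}}{\sqrt{x}+1}$ yielding the same bounds $\frac{4x^{\alpha}}{(\sqrt{x}+1)^{2}}-1$ and $\frac{x^{\alpha}}{\sqrt{x}}-1$, obtains the reversed scalar chain $k\leq j\leq q\leq s\leq r$ from the sign of $x-1$, and concludes by Theorem \ref{mnp} with $h(t)=t^{\beta}$. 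Your additional observation that $A^{\beta}\geq B$ forces the spectrum of $A^{-\frac{\beta}{2}}BA^{-\frac{\beta}{2}}$ into $(0,1]$, and your suggested direct verification of the scalar chain as a safeguard, are both sound.
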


\begin{proof}
	For $\alpha\geq 0$ and $0 < x \leq 1,$ 
	define $f(t)=\dfrac{x^{\alpha}}{t}-1,$ $t \in [x, 1]$.  Then $l(\lambda)$ and $L(\lambda)$ defined in Lemma \ref{RHHI} are
\begin{align*}
l(\lambda)&= x^{\alpha} \dfrac{2\lambda}{\lambda(1 - x)+2x} +	
x^{\alpha} \dfrac{2(1-\lambda)}{\lambda(1 - x)+(x+1)}-1;\\
L(\lambda)&=\dfrac{1}{2} \left( \dfrac{x^{\alpha}}{\lambda(1-x)+x}+\lambda x^{\alpha-1}+(1-\lambda)x^{\alpha} \right)-1.
\end{align*}
By applying refined Hermite-Hadamard inequality (see Lemma \ref{RHHI}) for $f$ on the interval $[x,1],$ we get
\begin{align*}
f \left( \dfrac{x+1}{2} \right) \leq \sup_{\lambda\in[0,1]}l(\lambda)\leq \dfrac{1}{1-x}\int_x^1 \left( \dfrac{x^{\alpha}}{t}-1 \right) dt \leq \inf_{\lambda\in[0,1]}L(\lambda)\leq \dfrac{f(x)+f(1)}{2}.
\end{align*}

By extreme value theorem, we know that 
\begin{align*}
\sup_{\lambda\in[0,1]}l(\lambda)&=l \left( \frac{\sqrt{x}}{\sqrt{x}+1} \right) = \dfrac{4x^{\alpha}}{(\sqrt{x}+1)^2}-1,\\
\inf_{\lambda\in[0,1]}L(\lambda)&=L \left( \frac{\sqrt{x}}{\sqrt{x}+1} \right)=\dfrac{x^{\alpha}}{\sqrt{x}}-1.\\
\end{align*}
Thus,
\begin{align*}
\dfrac{2x^{\alpha}}{x+1}-1&\leq \dfrac{4x^{\alpha}}{(\sqrt{x}+1)^2}-1
\leq \dfrac{1}{x-1} \left[ x^{\alpha}\ln x-(x-1) \right]\\
 &\leq \dfrac{x^{\alpha}}{\sqrt{x}}-1
 \leq \dfrac{1}{2} \left( x^{\alpha}+x^{\alpha-1} \right)-1.
\end{align*}
Let 
\begin{align*}
r(x)&= 2 \left( 1-\dfrac{2}{x+1} \right) x^{\alpha},\\
s(x)&=4x^{\alpha}-\dfrac{8x^{\alpha}}{\sqrt{x}+1},\\
q(x)&=x^{\alpha}\ln(x),\\
j(x)&=\dfrac{x^{\alpha}(x-1)}{\sqrt{x}},\\
k(x)&=\dfrac{1}{2} \left( x^{\alpha+1}-x^{\alpha-1} \right).
\end{align*}
Then 
\begin{equation}
\label{rsqjk2}
k(x)\leq j(x)\leq q(x)\leq s(x)\leq r(x).
\end{equation}
Applying Theorem \ref{mnp} to (\ref{rsqjk2}), the desired inequalities follow.
\end{proof}
Similarly, combing Theorem \ref{ulbroe1} with Proposition \ref{orlurnp}, we have  
\begin{corollary}\label{joropi2}
If $A$ and $B$ are two positive invertible elements in $\Al$, with $A^{\beta}\geq B,$ and $\alpha\geq 0,$ $\beta>0,$ then 
\begin{align*}
A\#_{(\alpha, \beta)}B-A\#_{(\alpha-1, \beta)}B
&\leq  {\rm V}\leq {\rm III}\\
&\leq S_{\alpha, \beta}(A|B)\\
&\leq {\rm II}\leq {\rm I}\\
&\leq A\#_{(\alpha +1 , \beta)}B-A\#_{(\alpha, \beta)}B.
\end{align*}

\end{corollary}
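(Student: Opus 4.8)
The plan is to assemble the desired seven-term chain purely by concatenation, using Theorem \ref{ulbroe1} for the five central terms and Proposition \ref{orlurnp} for the two outermost bounds. The key structural observation is that Proposition \ref{orlurnp} is proved from pointwise inequalities between continuous functions that are valid for all $x>0$, with no sign condition relating $A^{\beta}$ and $B$; hence parts (i) and (ii) of that proposition hold verbatim under the present hypothesis $A^{\beta}\geq B$ and can be invoked directly.

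First I would record the central chain. Since $A^{\beta}\geq B$, Theorem \ref{ulbroe1} gives
\begin{align*}
{\rm V}\leq {\rm III}\leq S_{\alpha,\beta}(A|B)\leq {\rm II}\leq {\rm I}.
\end{align*}
Note that in this regime the orientation is the reverse of the $A^{\beta}\leq B$ case of Corollary \ref{crjropi}: here ${\rm V}$ is the smallest and ${\rm I}$ the largest of the five middle quantities, so the two flanking bounds must be attached accordingly, below ${\rm V}$ and above ${\rm I}$.

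Next I would supply those flanking bounds from Proposition \ref{orlurnp}. Part (ii) yields the lower bound
\begin{align*}
A\#_{(\alpha,\beta)}B-A\#_{(\alpha-1,\beta)}B\leq {\rm V},
\end{align*}
which sits just below the smallest central term, and part (i) yields the upper bound
\begin{align*}
{\rm I}\leq A\#_{(\alpha+1,\beta)}B-A\#_{(\alpha,\beta)}B,
\end{align*}
which sits just above the largest central term. Splicing these two inequalities onto the ends of the central chain produces exactly the seven-term inequality asserted in the corollary.

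There is no genuine obstacle here; the result is a formal corollary. The only point requiring care is the bookkeeping of orientation: one must verify that the endpoints of the Theorem \ref{ulbroe1} chain are ${\rm V}$ (minimal) and ${\rm I}$ (maximal), and then match them against the correct parts of Proposition \ref{orlurnp} so that the outer terms extend the chain in the right direction. Since parts (i) and (ii) of Proposition \ref{orlurnp} are hypothesis-free on the $A^{\beta}$--$B$ comparison, no additional analytic work is needed beyond this matching.
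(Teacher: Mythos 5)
Your proposal is correct and is exactly the paper's argument: the paper derives Corollary \ref{joropi2} by splicing the left half of Proposition \ref{orlurnp}(ii) and the right half of Proposition \ref{orlurnp}(i) onto the chain ${\rm V}\leq {\rm III}\leq S_{\alpha,\beta}(A|B)\leq {\rm II}\leq {\rm I}$ from Theorem \ref{ulbroe1}. Your added remark that Proposition \ref{orlurnp} carries no hypothesis comparing $A^{\beta}$ with $B$ (being derived from pointwise inequalities valid for all $x>0$) is precisely the point that makes the concatenation legitimate.
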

The following improves Corollary 5 in \cite{fujii1989uhlmann} and extends it to our more general settings.
\begin{corollary}\label{joropi3}
If $A$ and $B$ are two positive invertible elements in $\Al$, with $A\geq B,$ then
\begin{align*}
A-AB^{-1}A&\leq \frac{1}{2}(B-AB^{-1}A)\\
&\leq A\#_{\frac{1}{2}}B-A\#_{-\frac{1}{2}}B\\
&\leq S(A|B)\\
&\leq 4A-8A(A\#_{\frac{1}{2}}B+A)^{-1} A\\
&\leq 2(A-2A(A+B)^{-1}A)\\
&\leq B-A.
\end{align*}
 
\end{corollary}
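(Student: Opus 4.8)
The plan is to deduce Corollary~\ref{joropi3} from Corollary~\ref{joropi2} by specializing to $\alpha=0$ and $\beta=1$. With $\beta=1$ the hypothesis $A^{\beta}\geq B$ becomes exactly $A\geq B$, so the two statements have matching assumptions, and it remains only to identify each of the six entries in the chain of Corollary~\ref{joropi2} with the corresponding entry of Corollary~\ref{joropi3}.

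First I would dispatch the entries that follow immediately from the definition of $A\#_{(\alpha,\beta)}B$. At $\beta=1$ one has $A\#_{(0,1)}B=A$, $A\#_{(-1,1)}B=AB^{-1}A$, $A\#_{(1,1)}B=B$, and $A\#_{(\pm 1/2,1)}B=A\#_{\pm 1/2}B$. Consequently the two outer bounds become $A\#_{(0,1)}B-A\#_{(-1,1)}B=A-AB^{-1}A$ and $A\#_{(1,1)}B-A\#_{(0,1)}B=B-A$; moreover ${\rm V}=\tfrac12\big(A\#_{(1,1)}B-A\#_{(-1,1)}B\big)=\tfrac12(B-AB^{-1}A)$, ${\rm III}=A\#_{1/2}B-A\#_{-1/2}B$, and $S_{0,1}(A|B)=S(A|B)$. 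These are direct substitutions.

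The substantive step is rewriting ${\rm I}$ and ${\rm II}$ in the conjugation-free form displayed in Corollary~\ref{joropi3}. Writing $C=A^{-1/2}BA^{-1/2}$, the key congruence identities are $1+C=A^{-1/2}(A+B)A^{-1/2}$ and $A\#_{1/2}B+A=A^{1/2}\big(C^{1/2}+1\big)A^{1/2}$. Inverting and conjugating back by $A^{1/2}$ — using the covariance of inversion under the quadratic representation $U_X(Y)=XYX$ given by (\ref{ija}), together with $U_{A^{1/2}}^{2}=U_A$ — gives $A^{1/2}(1+C)^{-1}A^{1/2}=A(A+B)^{-1}A$ and $A^{1/2}\big(C^{1/2}+1\big)^{-1}A^{1/2}=A(A\#_{1/2}B+A)^{-1}A$. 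Feeding these into the definitions \eqref{I} and \eqref{II} yields ${\rm I}=2A-4A(A+B)^{-1}A=2\big(A-2A(A+B)^{-1}A\big)$ and ${\rm II}=4A-8A(A\#_{1/2}B+A)^{-1}A$, which are exactly the two inner entries of Corollary~\ref{joropi3}.

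With all six identifications in hand, substituting $\alpha=0$, $\beta=1$ into the inequality chain of Corollary~\ref{joropi2} produces the asserted chain. I expect the only real obstacle to be the bookkeeping in the two congruence identities — in particular making sure, in the JC-algebra setting, that $A(A+B)^{-1}A$ and $A(A\#_{1/2}B+A)^{-1}A$ are the legitimate elements produced by the quadratic representation, so that the formal associative manipulations are valid via (\ref{ija}) and functional calculus; everything else is routine substitution.
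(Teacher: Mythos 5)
Your proposal is correct and matches the paper's intended argument: Corollary~\ref{joropi3} is stated there as an immediate specialization of Corollary~\ref{joropi2} to $\alpha=0$, $\beta=1$, which is exactly what you do. Your explicit verification of the congruence identities $A^{1/2}(1+C)^{-1}A^{1/2}=A(A+B)^{-1}A$ and $A^{1/2}(C^{1/2}+1)^{-1}A^{1/2}=A(A\#_{1/2}B+A)^{-1}A$ (with $C=A^{-1/2}BA^{-1/2}$) simply fills in the bookkeeping the paper leaves implicit.
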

\section{Further Refinements}
Suppose that $A, B$ are two positive invertible elements. For any real number $\delta>0,$ $\alpha\geq 0$ and $\beta>0,$ we denote
\begin{align*}
{\rm I'}&=(\ln\delta+2)A\#_{(\alpha,\beta)} B-2\delta A^{\frac{\beta}{2}}[(A^{-\frac{\beta}{2}}BA^{-\frac{\beta}{2}}+\delta)^{-1}(A^{-\frac{\beta}{2}}BA^{-\frac{\beta}{2}})^{\alpha}]A^{\frac{\beta}{2}}\\
{\rm II'}&=(\ln\delta+4)A\#_{(\alpha,\beta)} B-8\sqrt{\delta}A^{\frac{\beta}{2}}[\big((A^{-\frac{\beta}{2}}BA^{-\frac{\beta}{2}})^{\frac{1}{2}}+\sqrt{\delta}\big)^{-1}(A^{-\frac{\beta}{2}}BA^{-\frac{\beta}{2}})^{\alpha}]A^{\frac{\beta}{2}}\\
{\rm III'}&=(\frac{1}{\sqrt{\delta}}A\#_{(\alpha+\frac{1}{2},\beta)}B-\sqrt{\delta} A\#_{(\alpha-\frac{1}{2},\beta)}B)+\ln\delta A\#_{(\alpha,\beta)}B\\
{\rm V'}&=\frac{1}{2}(\frac{1}{\delta}A\#_{(\alpha+1,\beta)}B-\delta A\#_{(\alpha-1,\beta)}B)+\ln\delta A\#_{(\alpha,\beta)}B.
\end{align*}

\begin{theorem}
	\label{ulbroe2}
Let $A$ and $B$ be two positive invertible elements in $\Al$ which is a $C^*$-algebra or real $C^*$ or JC-algebra $\Al,$ with unit, satisfying $ \delta\geq 1, \delta A^{\beta}\leq B,$ and $\alpha\geq0$ and $\beta\geq 0.$ Then
\begin{equation}
{\rm I'}\leq {\rm II'}\leq S_{\alpha, \beta}(A|B)\leq {\rm III'}\leq {\rm V'}.
\end{equation}
For  $\delta\leq 1$ and $B\leq \delta A^{\beta}$, the reversed inequalities hold.
\end{theorem}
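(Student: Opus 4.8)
The plan is to follow the scheme of the proofs of Theorems \ref{ulbroe} and \ref{ulbroe1}, but with the base point $1$ replaced by the parameter $\delta$. Writing $X = A^{-\beta/2}BA^{-\beta/2}$, the hypotheses $\delta \geq 1$ and $\delta A^\beta \leq B$ give, after conjugating by $A^{-\beta/2}$, that $X \geq \delta$, so the spectrum of $X$ is contained in $[\delta,\infty)$ and every relevant scalar $x$ satisfies $x \geq \delta$. For each such fixed $x$ I would apply the refined Hermite--Hadamard inequality (Lemma \ref{RHHI}) to the convex function $f(t) = x^\alpha/t - 1$ on the interval $[\delta, x]$, exactly as $f$ was used on $[1,x]$ in Theorem \ref{ulbroe}. (The constraint $\delta \geq 1$ also guarantees $A^\beta \leq \delta A^\beta \leq B$, so we are refining Theorem \ref{ulbroe}.)

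First I would compute the five quantities appearing in Lemma \ref{RHHI}. The midpoint and endpoint terms are immediate, $f(\tfrac{\delta+x}{2}) = \tfrac{2x^\alpha}{x+\delta} - 1$ and $\tfrac{f(\delta)+f(x)}{2} = \tfrac12(\tfrac{x^\alpha}{\delta} + x^{\alpha-1}) - 1$, and the average of $f$ is $\tfrac{1}{x-\delta}\int_\delta^x f = \tfrac{x^\alpha\ln(x/\delta)}{x-\delta} - 1$. The substantive step, and the one I expect to be the main obstacle, is the two extremal problems: differentiating $l(\lambda)$ and $L(\lambda)$ and solving $l'(\lambda)=0$, $L'(\lambda)=0$ should produce the common optimizer $\lambda^\ast = \tfrac{\sqrt\delta}{\sqrt x + \sqrt\delta}$, whence $\sup_\lambda l(\lambda) = \tfrac{4x^\alpha}{(\sqrt x + \sqrt\delta)^2} - 1$ and $\inf_\lambda L(\lambda) = \tfrac{x^{\alpha-1/2}}{\sqrt\delta} - 1$. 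This is the $\delta$-analogue of the extreme value computation in Theorem \ref{ulbroe}, and verifying that $\lambda^\ast$ is indeed the extremizer and simplifying the resulting values is where the real work lies.

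Next I would convert these scalar inequalities into the operator chain. Each of the five Hermite--Hadamard quantities has the shape $E(x) - 1$; multiplying the chain through by $x - \delta > 0$ and then adding the common term $\ln\delta\, x^\alpha$ turns each $E-1$ into $(x-\delta)E + \ln\delta\, x^\alpha$ (the constant $-(x-\delta)$ produced by the $-1$ is common to all five terms and hence harmless). Since $x-\delta>0$ and the added term is uniform, the chain is preserved, and the middle quantity becomes precisely $x^\alpha\ln x$. A direct simplification then identifies the five resulting functions as the symbols whose noncommutative perspectives $P_{\cdot\triangle h}(B,A)$, with $h(t)=t^\beta$, are ${\rm I}'$, ${\rm II}'$, $S_{\alpha,\beta}(A|B)$, ${\rm III}'$, and ${\rm V}'$; for instance the rightmost becomes $\tfrac12(\tfrac1\delta x^{\alpha+1} - \delta x^{\alpha-1}) + \ln\delta\, x^\alpha$, with perspective ${\rm V}'$, and the $\inf L$ term becomes $\tfrac1{\sqrt\delta}x^{\alpha+1/2} - \sqrt\delta\, x^{\alpha-1/2} + \ln\delta\, x^\alpha$, with perspective ${\rm III}'$. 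Applying Theorem \ref{mnp} then yields ${\rm I}' \leq {\rm II}' \leq S_{\alpha,\beta}(A|B) \leq {\rm III}' \leq {\rm V}'$. As in the proof of Proposition \ref{orlurnp}, in the JC-algebra case one notes that every operand lies in the associative JC-subalgebra generated by $X$ and $1$, so the products are well defined and remain in $\Al$.

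For the reversed statement, under $\delta \leq 1$ and $B \leq \delta A^\beta$ one has $X \leq \delta$, so the spectrum of $X$ lies in $(0,\delta]$ and $x \leq \delta$; I would repeat the computation verbatim on the interval $[x,\delta]$. The five scalar quantities come out as the same expressions in $x$ and $\delta$, but now the conversion multiplies by $x-\delta<0$, which reverses the chain and produces ${\rm V}' \leq {\rm III}' \leq S_{\alpha,\beta}(A|B) \leq {\rm II}' \leq {\rm I}'$, completing the proof. The two loci of difficulty are thus the explicit optimization of $l$ and $L$ and the bookkeeping that matches the $\ln\delta$-corrected functions to the stated operators ${\rm I}', {\rm II}', {\rm III}', {\rm V}'$.
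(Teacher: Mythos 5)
Your proof is correct, and it lands on exactly the same five scalar functions as the paper --- $r(x)=\bigl[\ln\delta+2\bigl(1-\tfrac{2\delta}{x+\delta}\bigr)\bigr]x^{\alpha}$, $s(x)$, $q(x)=x^{\alpha}\ln x$, $j(x)$, $k(x)$ --- followed by the same identification of their perspectives with ${\rm I'},{\rm II'},S_{\alpha,\beta}(A|B),{\rm III'},{\rm V'}$ and an application of Theorem \ref{mnp}; but you reach that scalar chain by a genuinely different, more laborious route. The paper never reruns the Hermite--Hadamard argument: it takes the chain already proved on $[1,\infty)$ in the proof of Theorem \ref{ulbroe} (resp.\ on $(0,1]$ in Theorem \ref{ulbroe1} for the reversed case), substitutes $x\mapsto x/\delta$ --- legitimate because $\delta A^{\beta}\leq B$ puts the spectrum of $A^{-\beta/2}BA^{-\beta/2}$ in $[\delta,\infty)$, i.e.\ $x/\delta\geq 1$ --- and then rescales by $\delta^{\alpha}$ and adds $x^{\alpha}\ln\delta$ to every term. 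You instead apply Lemma \ref{RHHI} afresh to $f(t)=x^{\alpha}/t-1$ on $[\delta,x]$ and redo the optimization; your optimizer $\lambda^{\ast}=\sqrt{\delta}/(\sqrt{x}+\sqrt{\delta})$ and the values $\sup_{\lambda}l=4x^{\alpha}/(\sqrt{x}+\sqrt{\delta})^{2}-1$ and $\inf_{\lambda}L=x^{\alpha-1/2}/\sqrt{\delta}-1$ check out (they are in fact forced by the $\delta=1$ computation under the affine change of variable $t\mapsto\delta t$), and your normalization --- multiply through by $x-\delta$, restore the common constant, add $\ln\delta\,x^{\alpha}$ --- does reproduce $r,s,q,j,k$ exactly, with the sign of $x-\delta$ correctly flipping the chain when $B\leq\delta A^{\beta}$. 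What the paper's substitution buys is brevity and no new calculus; what your version buys is a self-contained argument that also verifies the paper's terse claim that the inequalities hold ``by substituting $x$ with $x/\delta$''. One economy you could add: the exact extremizer is never needed, since for any fixed $\lambda$ one has $f\bigl(\tfrac{a+b}{2}\bigr)\leq l(\lambda)\leq\tfrac{1}{b-a}\int_{a}^{b}f(t)\,dt\leq L(\lambda)\leq\tfrac{f(a)+f(b)}{2}$ (the two nodes of $l(\lambda)$ average to the midpoint, so Jensen gives the first inequality, and convexity gives the last; the middle two are Lemma \ref{RHHI}), so merely evaluating $l$ and $L$ at your $\lambda^{\ast}$ already closes the chain without proving it is optimal.
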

\begin{proof}
Let $x\geq \delta.$ Then $x/{\delta}\geq 1.$ By substituting $x$ with $x/{\delta},$ the following inequalities hold:
\begin{enumerate}
\item[(a)]$\left( \ln \delta+2(1-\dfrac{2\delta}{x+\delta}) \right)x^{\alpha}	 
\leq \left( \ln \delta+4-\dfrac{8\sqrt{\delta}}{\sqrt{x}+\sqrt{\delta}} \right) x^{\alpha}	\leq x^{\alpha}	\ln\alpha.$
\item[(b)] $x^{\alpha}\ln\alpha \leq x^{\alpha+\frac{1}{2}}\dfrac{1}{\sqrt{\delta}}-x^{\alpha-\frac{1}{2}}\sqrt{\delta}+x^{\alpha}\ln\delta\leq\dfrac{x^{\alpha+1}}{2\delta}-\dfrac{x^{\alpha -1}}{2}\delta+x^{\alpha}\ln \delta.$
\end{enumerate}

Let 
\begin{align*}
r(x)&= \left[ \ln \delta+2 \left( 1-\dfrac{2\delta}{x+\delta} \right) \right] x^{\alpha}	,\\
s(x)&=\left[ \ln \delta+4-\dfrac{8\sqrt{\delta}}{\sqrt{x}+\sqrt{\delta}} \right] x^{\alpha},\\
q(x)&=x^{\alpha}\ln(x),\\
j(x)&=x^{\alpha+\frac{1}{2}}\dfrac{1}{\sqrt{\delta}}-x^{\alpha-\frac{1}{2}}\sqrt{\delta}+x^{\alpha}\ln\delta,\\
k(x)&=\dfrac{x^{\alpha+1}}{2\delta}-\dfrac{x^{\alpha -1}}{2}\delta+x^{\alpha}\ln \delta,\\
h(t)&=t^{\beta}.
\end{align*}
Then
\begin{align*}
P_{r\triangle h}(B,A)&=(\ln\delta+2)A\#_{(\alpha,\beta)} B-2\delta A^{\frac{\beta}{2}}[(A^{-\frac{\beta}{2}}BA^{-\frac{\beta}{2}}+\delta)^{-1}(A^{-\frac{\beta}{2}}BA^{-\frac{\beta}{2}})^{\alpha}]A^{\frac{\beta}{2}}={\rm I'}\\
P_{s\triangle h}(B,A)&=(\ln\delta+4)A\#_{(\alpha,\beta)} B-8\sqrt{\delta}A^{\frac{\beta}{2}}[\big((A^{-\frac{\beta}{2}}BA^{-\frac{\beta}{2}})^{\frac{1}{2}}+\sqrt{\delta}\big)^{-1}(A^{-\frac{\beta}{2}}BA^{-\frac{\beta}{2}})^{\alpha}]A^{\frac{\beta}{2}}={\rm II'}\\
P_{s\triangle h}(B,A)&=S_{\alpha,\beta}(A|B)\\
P_{j\triangle h}(B,A)&= \left( \frac{1}{\sqrt{\delta}}A\#_{(\alpha+\frac{1}{2},\beta)}B-\sqrt{\delta} A\#_{(\alpha-\frac{1}{2},\beta)}B \right)+\ln\delta A\#_{(\alpha,\beta)}B={\rm III'}\\
P_{k\triangle h}(B,A)&=\frac{1}{2} \left(\frac{1}{\delta}A\#_{(\alpha+1,\beta)}B-
\delta A\#_{(\alpha-1,\beta)}B \right)+\ln\delta A\#_{(\alpha,\beta)}B={\rm V'}.
\end{align*}
The results follow immediately by applying Theorem \ref{mnp}. 
\end{proof}

Theorem \ref{ulbroe2} above refines Theorem 3 in \cite{Nikoufar2020} and extends it to our broader settings, 
where it is shown 
$${\rm I'} \leq S_{\alpha, \beta}(A|B)\leq {\rm V'}.$$

\begin{proposition}
	\label{ulbroe3}
For  $\alpha\geq 0$, $\beta>0$, $\delta\geq 1$, and positive invertible elements $A$ and $B$ in $\Al$ 
with $\delta A^{\beta}\leq B,$ we have
\begin{align*}
{\rm II}\leq {\rm II'}\quad \mbox{and}\quad {\rm III'}\leq {\rm III},
\end{align*}	
where {\rm II} and {\rm III} are defined in {\rm (\ref{II})} and {\rm (\ref{III})} respectively.  
For $\delta\leq 1$ and $B\leq \delta A^{\beta}$ the reversed inequalities hold.
\end{proposition}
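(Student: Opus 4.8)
The plan is to realize all four quantities ${\rm II},{\rm II}',{\rm III},{\rm III}'$ as noncommutative perspective functions $P_{f\triangle h}(B,A)$ with $h(t)=t^{\beta}$, so that the desired operator inequalities reduce, via Theorem \ref{mnp}, to scalar inequalities between the generating functions on the spectrum of $X:=A^{-\beta/2}BA^{-\beta/2}$. Reading off the functions identified in the proofs of Theorems \ref{ulbroe} and \ref{ulbroe2}, we have ${\rm II}=P_{s_{0}\triangle h}(B,A)$ and ${\rm II}'=P_{s_{\delta}\triangle h}(B,A)$ with
\[
s_{0}(x)=\Bigl(4-\tfrac{8}{\sqrt{x}+1}\Bigr)x^{\alpha},\qquad
s_{\delta}(x)=\Bigl(\ln\delta+4-\tfrac{8\sqrt{\delta}}{\sqrt{x}+\sqrt{\delta}}\Bigr)x^{\alpha},
\]
and ${\rm III}=P_{j_{0}\triangle h}(B,A)$, ${\rm III}'=P_{j_{\delta}\triangle h}(B,A)$ with
\[
j_{0}(x)=x^{\alpha+\frac12}-x^{\alpha-\frac12},\qquad
j_{\delta}(x)=\tfrac{1}{\sqrt{\delta}}x^{\alpha+\frac12}-\sqrt{\delta}\,x^{\alpha-\frac12}+x^{\alpha}\ln\delta.
\]
Under the hypothesis $\delta A^{\beta}\le B$ the spectrum of $X$ lies in $[\delta,\infty)$, so it suffices to prove $s_{0}\le s_{\delta}$ and $j_{\delta}\le j_{0}$ on $[\delta,\infty)$; the monotonicity of the three functional calculi (the content of Theorem \ref{mnp}) then delivers ${\rm II}\le{\rm II}'$ and ${\rm III}'\le{\rm III}$.

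To handle the scalar inequalities I would substitute $u=\sqrt{x}$ and $d=\sqrt{\delta}$ (so $u\ge d\ge 1$) and divide by $x^{\alpha}>0$. The inequality $s_{0}\le s_{\delta}$ becomes $\tfrac{8u(d-1)}{(u+d)(u+1)}\le 2\ln d$, while $j_{\delta}\le j_{0}$ becomes $2\ln d\le (d-1)\bigl(\tfrac{u}{d}+\tfrac1u\bigr)$. For the first, the left-hand side equals $8(d-1)\phi(u)$ with $\phi(u)=u/[(u+d)(u+1)]$, and since $\phi'(u)$ has the sign of $d-u^{2}$, the function $\phi$ is decreasing on $[d,\infty)$; hence the left-hand side is maximized at $u=d$, where it equals $\tfrac{4(d-1)}{d+1}$, and the claim reduces to $\tfrac{2(d-1)}{d+1}\le\ln d$. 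For the second, the map $u\mapsto \tfrac ud+\tfrac1u$ is increasing on $[d,\infty)$, so the right-hand side is minimized at $u=d$ with value $d-\tfrac1d$, and the claim reduces to $2\ln d\le d-\tfrac1d$. Both reduced inequalities are classical logarithmic estimates: the differences $\ln d-\tfrac{2(d-1)}{d+1}$ and $(d-\tfrac1d)-2\ln d$ have derivatives $\tfrac{(d-1)^{2}}{d(d+1)^{2}}$ and $\tfrac{(d-1)^{2}}{d^{2}}$, both nonnegative, so each vanishes at $d=1$ and is $\ge 0$ for $d\ge1$.

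For the reversed regime $\delta\le1$, $B\le\delta A^{\beta}$, the spectrum of $X$ lies in $(0,\delta]$, i.e.\ $u\le d\le1$, and I would re-run the same two optimizations on $(0,d]$: there $\phi$ is increasing while $u\mapsto\tfrac ud+\tfrac1u$ is decreasing, so both relevant extrema again occur at the endpoint $u=d$, and the two reduced inequalities become $\tfrac{2(d-1)}{d+1}\ge\ln d$ and $2\ln d\ge d-\tfrac1d$. These are exactly the reverse logarithmic estimates, which hold for $d\le1$ since the same two derivative computations show the differences are monotone and vanish at $d=1$. Applying Theorem \ref{mnp} with the reversed scalar inequalities then yields ${\rm II}\ge{\rm II}'$ and ${\rm III}'\ge{\rm III}$.

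I expect the main obstacle to be purely the scalar analysis, specifically verifying that the extremum of each rational expression over the spectral range occurs at the endpoint $u=d$ rather than in the interior, since it is precisely this endpoint localization that permits the reduction to one-variable logarithmic inequalities; once it is established, the remaining estimates are elementary consequences of $(d-1)^{2}\ge0$. Some care is also needed to confirm that $\delta A^{\beta}\le B$ (resp.\ $B\le\delta A^{\beta}$) genuinely pins the spectrum of $X$ to $[\delta,\infty)$ (resp.\ $(0,\delta]$), so that checking the scalar inequalities only on that range is legitimate when invoking Theorem \ref{mnp}.
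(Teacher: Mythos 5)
Your proof is correct, and it shares the paper's outer framework (it must, given the setup): realize ${\rm II},{\rm II}',{\rm III},{\rm III}'$ as perspective functions $P_{f\triangle h}(B,A)$ with $h(t)=t^{\beta}$, note that $\delta A^{\beta}\leq B$ pins $\mathrm{Sp}(A^{-\beta/2}BA^{-\beta/2})$ inside $[\delta,\infty)$, and reduce via Theorem \ref{mnp} to scalar inequalities between the generating functions. Where you genuinely diverge is in how those scalar inequalities are proved. The paper fixes $x$ and varies the \emph{deformation parameter}: it sets $\Gamma(t)=\bigl(\ln t+4-\tfrac{8\sqrt{t}}{\sqrt{x}+\sqrt{t}}\bigr)x^{\alpha}$ and $\Omega(t)=x^{\alpha+\frac12}t^{-\frac12}-x^{\alpha-\frac12}t^{\frac12}+x^{\alpha}\ln t$, computes the perfect-square derivatives
\begin{equation*}
\Gamma'(t)=\frac{(\sqrt{x}-\sqrt{t}\,)^{2}}{t(\sqrt{x}+\sqrt{t}\,)^{2}}\,x^{\alpha}\geq 0,
\qquad
\Omega'(t)=-\frac{x^{\alpha}}{2t^{3/2}}\bigl(x^{\frac14}-x^{-\frac14}t^{\frac12}\bigr)^{2}\leq 0,
\end{equation*}
and concludes $s_{0}=\Gamma(1)\leq\Gamma(\delta)=s_{\delta}$ and $j_{\delta}=\Omega(\delta)\leq\Omega(1)=j_{0}$ in one stroke, for \emph{every} $x>0$. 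You instead fix $\delta$ and optimize over the spectral variable $u=\sqrt{x}$: you locate the extremum of each rational expression at the endpoint $u=\sqrt{\delta}$ and then invoke the classical logarithmic-mean estimates $\tfrac{2(d-1)}{d+1}\leq\ln d\leq\tfrac12\bigl(d-\tfrac1d\bigr)$ for $d\geq1$ (reversed for $d\leq1$). Both routes are elementary calculus and both are fully correct (I checked your sign analyses, endpoint values, and the two derivative computations $\tfrac{(d-1)^{2}}{d(d+1)^{2}}$ and $\tfrac{(d-1)^{2}}{d^{2}}$). The trade-off: the paper's parameter-monotonicity argument is shorter and shows the pointwise inequalities between generating functions hold on all of $(0,\infty)$, so the order hypothesis on $A,B$ is needed only to make Theorem \ref{mnp} applicable on a common interval; your argument, by contrast, makes essential use of the spectral localization (the extremum-at-the-endpoint step fails on all of $(0,\infty)$, since for instance $8(d-1)\phi(u)$ and $2\ln d$ agree to second order near $u=\sqrt d$, $d\to1$), but it buys a transparent connection to the Hermite--Hadamard/logarithmic-mean inequalities that drive the rest of the paper, and it explains exactly where the hypothesis $\delta A^{\beta}\leq B$ is used.
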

\begin{proof}
(i) Proof of ${\rm II}\leq {\rm II'}$. Denote $\Gamma(t)= \left( \ln t+4-\dfrac{8\sqrt{t}}{\sqrt{x}+\sqrt{t}} \right) x^{\alpha}$ and $x\geq \delta\geq t\geq 1.$ Then $$\Gamma'(t)=\dfrac{(x^{\frac{1}{2}}-t^{\frac{1}{2}})^2}{(x^{\frac{1}{2}}+t^{\frac{1}{2}})^2t}\geq 0.$$	
Suppose $x$ is fixed, then 
$\Gamma(1)\leq \Gamma(\delta),$ that is,
\begin{align}\label{sharp1}
\left( 4-\dfrac{8}{\sqrt{x}+1} \right) x^{\alpha}
\leq \left( \ln t+4-\dfrac{8\sqrt{t}}{\sqrt{x}+\sqrt{t}} \right) x^{\alpha}	
\end{align}

Let $h(t)=t^{\beta}.$ By applying Theorem \ref{mnp} to the inequality (\ref{sharp1}), 
the result follows directly. 

(ii) Proof of $ {\rm III'}\leq {\rm III}$. Let $\Omega(t)=x^{\alpha+\frac{1}{2}}\dfrac{1}{\sqrt{t}}-x^{\alpha-\frac{1}{2}}\sqrt{t}+x^{\alpha}\ln t$ with $x\geq \delta\geq t\geq 1.$
Then $$\Omega'(t)=-\dfrac{x^{\alpha}}{2t^{\frac{3}{2}}} 
\left( x^{\frac{1}{4}}-x^{-\frac{1}{4}}t^{\frac{1}{2}} \right)^2\leq 0.$$
Suppose $x$ is fixed. Then $\Omega(\delta)\leq \Omega(1)$, which is equivalent to
\begin{align}\label{sharp2}
x^{\alpha+\frac{1}{2}}\frac{1}{\sqrt{\delta}}-x^{\alpha -\frac{1}{2}}\sqrt{\delta}+x^{\alpha}\ln \delta\leq x^{\alpha+\frac{1}{2}}-x^{\alpha-\frac{1}{2}}.\end{align}
Let $h(t)=t^{\beta}.$ By applying Theorem \ref{mnp} to the inequality ($\ref{sharp2}$), 
the result follows immediately. 
\end{proof}

Combining Theorem \ref{ulbroe2} and Proposition \ref{ulbroe3}, we have

\begin{corollary}
Let $A$ and $B$ be two positive invertible elements in $\Al$ which is a $C^*$-algebra or real $C^*$-algebra or JC-algebra, with a unit, let $\alpha \geq 0, \beta>0$, and let {\rm I} and {\rm V} be as in 
{\rm (\ref{I})} and {\rm (\ref{V})} respectively.
\begin{itemize}
\item[(i)] If  $\delta\geq 1$ and $\delta A^{\beta}\leq B$, then
\begin{align*}
{\rm I}\leq {\rm II}\leq {\rm II'}\leq S_{\alpha, \beta}(A|B)\leq {\rm III'}\leq {\rm V'}\leq {\rm V}.	
\end{align*}
\item[(ii)] If $\delta\leq 1$ and $\delta A^{\beta}\geq B$, then
\begin{align*}
 {\rm V}\leq {\rm V'}\leq {\rm III'}\leq S_{\alpha, \beta}(A|B)\leq {\rm II'}\leq {\rm II}\leq {\rm I}.	
\end{align*}	
	
\end{itemize}
\end{corollary}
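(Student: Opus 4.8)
The plan is to read off the claimed chains from the three results already proved, Theorems \ref{ulbroe} and \ref{ulbroe2} together with Proposition \ref{ulbroe3}, after checking that their hypotheses are satisfied, and then to supply by hand the one comparison, ${\rm V'}\leq {\rm V}$, that is not recorded among them.

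First I would treat part (i), so assume $\delta\geq 1$ and $\delta A^{\beta}\leq B$. Since $\delta\geq 1$ forces $A^{\beta}\leq\delta A^{\beta}\leq B$, Theorem \ref{ulbroe} applies and gives in particular ${\rm I}\leq {\rm II}$. The hypotheses of Theorem \ref{ulbroe2} hold verbatim, so it furnishes the middle run ${\rm II'}\leq S_{\alpha,\beta}(A|B)\leq {\rm III'}\leq {\rm V'}$, while Proposition \ref{ulbroe3} contributes ${\rm II}\leq {\rm II'}$. Concatenating these three outputs yields every inequality in the asserted chain ${\rm I}\leq {\rm II}\leq {\rm II'}\leq S_{\alpha,\beta}(A|B)\leq {\rm III'}\leq {\rm V'}\leq {\rm V}$ except the final link ${\rm V'}\leq {\rm V}$; note that the inequalities ${\rm III'}\leq {\rm V'}$ and ${\rm III'}\leq {\rm III}\leq {\rm V}$ furnished by the quoted results compare ${\rm V'}$ and ${\rm V}$ only with ${\rm III'}$, not with each other, so this last link genuinely needs its own argument.

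To close it I would run the scalar-monotonicity device of Proposition \ref{ulbroe3} one more time. For fixed $x\geq\delta$ set $\Psi(t)=\frac{x^{\alpha+1}}{2t}-\frac{x^{\alpha-1}}{2}t+x^{\alpha}\ln t$ on $1\leq t\leq\delta$; differentiating gives $\Psi'(t)=-\frac{x^{\alpha-1}}{2t^{2}}(x-t)^{2}\leq 0$, so $\Psi$ is nonincreasing and $\Psi(\delta)\leq\Psi(1)$. But $\Psi(\delta)$ is precisely the scalar function generating ${\rm V'}$ and $\Psi(1)=\frac12(x^{\alpha+1}-x^{\alpha-1})$ is the one generating ${\rm V}$, so applying Theorem \ref{mnp} with $h(t)=t^{\beta}$ to the pointwise inequality $\Psi(\delta)\leq\Psi(1)$ delivers ${\rm V'}\leq {\rm V}$ and completes part (i).

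Part (ii) is symmetric. If $\delta\leq 1$ and $B\leq\delta A^{\beta}$, then $B\leq\delta A^{\beta}\leq A^{\beta}$, so Theorem \ref{ulbroe1} supplies ${\rm II}\leq {\rm I}$, Theorem \ref{ulbroe2} supplies the reversed middle run ${\rm V'}\leq {\rm III'}\leq S_{\alpha,\beta}(A|B)\leq {\rm II'}$, and Proposition \ref{ulbroe3} supplies ${\rm II'}\leq {\rm II}$; the same $\Psi$ is still nonincreasing but now $\delta\leq 1$ gives $\Psi(\delta)\geq\Psi(1)$, hence ${\rm V}\leq {\rm V'}$. The only real obstacle throughout is this comparison of ${\rm V'}$ with ${\rm V}$: it is a verbatim analogue of the ${\rm III'}\leq {\rm III}$ computation in Proposition \ref{ulbroe3}, but since it is not literally listed there it has to be inserted to bridge the step from ${\rm III'}$ to ${\rm V}$ through the intermediate value ${\rm V'}$.
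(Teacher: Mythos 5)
Your proof is correct, and it is in fact more complete than the paper's own treatment. The paper disposes of this corollary with the single phrase ``Combining Theorem \ref{ulbroe2} and Proposition \ref{ulbroe3}, we have,'' but that combination genuinely does not produce the whole chain: Proposition \ref{ulbroe3} compares only ${\rm II}$ with ${\rm II'}$ and ${\rm III'}$ with ${\rm III}$, so together with Theorem \ref{ulbroe2} (and Theorem \ref{ulbroe}, which is needed for ${\rm I}\leq{\rm II}$ and which you correctly invoke after observing that $\delta\geq 1$ and $\delta A^{\beta}\leq B$ force $A^{\beta}\leq B$) one obtains ${\rm I}\leq{\rm II}\leq{\rm II'}\leq S_{\alpha,\beta}(A|B)\leq{\rm III'}\leq{\rm V'}$ and ${\rm III'}\leq{\rm III}\leq{\rm V}$, but never the asserted link ${\rm V'}\leq{\rm V}$. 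You are right to single that link out, and your argument for it is sound: for fixed $x>0$ the function $\Psi(t)=\frac{x^{\alpha+1}}{2t}-\frac{x^{\alpha-1}}{2}t+x^{\alpha}\ln t$ satisfies $\Psi'(t)=-\frac{x^{\alpha-1}}{2t^{2}}(x-t)^{2}\leq 0$, so $\Psi(\delta)\leq\Psi(1)$ when $\delta\geq 1$ (reversed when $\delta\leq 1$), and since $\Psi(\delta)$ and $\Psi(1)$ are exactly the scalar generators $k(x)$ of ${\rm V'}$ and ${\rm V}$ appearing in the proofs of Theorems \ref{ulbroe2} and \ref{ulbroe}, applying Theorem \ref{mnp} with $h(t)=t^{\beta}$ converts this pointwise inequality into ${\rm V'}\leq{\rm V}$ (resp.\ ${\rm V}\leq{\rm V'}$). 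This is a verbatim analogue of the monotonicity device used to prove ${\rm III'}\leq{\rm III}$ in Proposition \ref{ulbroe3}, so your proof stays entirely within the paper's toolkit while supplying a step the paper leaves unjustified; part (ii), which you route through Theorem \ref{ulbroe1} after noting $B\leq\delta A^{\beta}\leq A^{\beta}$, is handled symmetrically and is likewise correct.
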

The following corollary provides a sharper refinement in our settings for the lower and upper bounds of the relative operator entropy established by Nikoufar in \cite{NIKOUFAR2014376, Nikoufar2020},  
which improved the bounds obtained 
earlier by Fujii and Kamei \cite{fujii1989relative,fujii1989uhlmann}.
\begin{corollary}
Let $A$ and $B$ be two positive invertible elements in $\Al$ which is a $C^*$-algebra or real $C^*$-algebra or JC-algebra, with a unit.
\begin{itemize}
\item[(i)] If $\delta\geq 1$ and $\delta A\leq B$, then 
\begin{align*}
A-AB^{-1}A
&\leq 2[A-2A(A+B)^{-1}A]\\
&\leq (\ln\delta)A+2[A-2\delta A(\delta A+B)^{-1}A]\\
&\leq (\ln\delta+4)A-8\sqrt{\delta}A(A\#_{\frac{1}{2}}B+\sqrt{\delta}A)^{-1}A \\
&\leq S(A|B)\\
&\leq \left( \frac{1}{\sqrt{\delta}}A\#_{\frac{1}{2}}B-\sqrt{\delta} A\#_{-\frac{1}{2}}B \right)+(\ln\delta) A\\
&\leq \frac{1}{2} \left( \frac{1}{\delta}B-\delta AB^{-1}A \right)+(\ln\delta) A\\
&\leq \frac{1}{2}(B-AB^{-1}A)\\
&\leq B-A.
\end{align*}
\item[(ii)] If $\delta\leq 1$ and $\delta A\geq B$, then 
\begin{align*}
A-AB^{-1}A
&\leq \frac{1}{2}(B-AB^{-1}A)\\
&\leq \frac{1}{2} \left( \frac{1}{\delta}B-\delta AB^{-1}A \right)+(\ln\delta) A\\
&\leq \left( \frac{1}{\sqrt{\delta}}A\#_{\frac{1}{2}}B-\sqrt{\delta} A\#_{-\frac{1}{2}}B \right)+(\ln\delta) A\\
&\leq  S(A|B)\\
&\leq (\ln\delta+4)A-8\sqrt{\delta}A(A\#_{\frac{1}{2}}B+\sqrt{\delta}A)^{-1}A \\
&\leq (\ln\delta)A+2[A-2\delta A(\delta A+B)^{-1}A]\\
&\leq 2[A-2A(A+B)^{-1}A]\\
&\leq B-A.
\end{align*}		
\end{itemize}
\end{corollary}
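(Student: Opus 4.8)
The plan is to obtain this corollary by specializing every inequality already established to $\alpha = 0$ and $\beta = 1$, and then translating the abstract geometric means and perspective-function expressions into the explicit operator formulas displayed in the statement. The first task is purely computational: with $\alpha = 0$, $\beta = 1$ one has $A\#_{(0,1)}B = A$, $A\#_{(-1,1)}B = AB^{-1}A$, $A\#_{(1,1)}B = B$, $A\#_{(1/2,1)}B = A\#_{1/2}B$ and $A\#_{(-1/2,1)}B = A\#_{-1/2}B$, while the resolvent-type terms collapse through identities such as $A^{1/2}(1 + A^{-1/2}BA^{-1/2})^{-1}A^{1/2} = A(A+B)^{-1}A$, $A^{1/2}(A^{-1/2}BA^{-1/2} + \delta)^{-1}A^{1/2} = A(\delta A + B)^{-1}A$, and $A^{1/2}((A^{-1/2}BA^{-1/2})^{1/2} + \sqrt{\delta})^{-1}A^{1/2} = A(A\#_{1/2}B + \sqrt{\delta}A)^{-1}A$. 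Substituting these shows that the nine entries of chain (i) are, in order, the lower bound $A\#_{(0,1)}B - A\#_{(-1,1)}B$, then ${\rm I}$, ${\rm I'}$, ${\rm II'}$, $S_{0,1}(A|B) = S(A|B)$, ${\rm III'}$, ${\rm V'}$, ${\rm V}$, and the upper bound $A\#_{(1,1)}B - A\#_{(0,1)}B$; the same identifications apply to chain (ii). Since all the cited results already hold for a unital $C^*$-algebra, real $C^*$-algebra or JC-algebra, the three settings need no separate treatment.

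Once the terms are identified, the chain is assembled from three sources. The central block ${\rm I'} \le {\rm II'} \le S_{\alpha,\beta}(A|B) \le {\rm III'} \le {\rm V'}$ is exactly Theorem~\ref{ulbroe2} (for $\delta \ge 1$, $\delta A^{\beta}\le B$), specialized to $\alpha=0$, $\beta=1$. The outermost bounds $A - AB^{-1}A \le {\rm I}$ and ${\rm V} \le B - A$ are Proposition~\ref{orlurnp}(i) and (ii) in the same special case (equivalently, the endpoints of Corollary~\ref{joropi}). The two remaining links, ${\rm I} \le {\rm I'}$ and ${\rm V'} \le {\rm V}$ (the latter also recorded in the preceding corollary), I would prove by the monotonicity-in-$\delta$ method of Proposition~\ref{ulbroe3}. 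Concretely, for fixed $x \ge \delta \ge t \ge 1$ set $\Phi(t) = [\ln t + 2(1 - \frac{2t}{x+t})]x^{\alpha}$ and $\Psi(t) = \frac{x^{\alpha+1}}{2t} - \frac{x^{\alpha-1}}{2}t + x^{\alpha}\ln t$, so that $\Phi(\delta)$ and $\Phi(1)$ are the scalar functions defining ${\rm I'}$ and ${\rm I}$, and $\Psi(\delta)$, $\Psi(1)$ those defining ${\rm V'}$ and ${\rm V}$. A short computation gives $\Phi'(t) = \frac{(x-t)^2}{t(x+t)^2}x^{\alpha} \ge 0$ and $\Psi'(t) = -\frac{x^{\alpha-1}}{2}(\frac{x}{t}-1)^2 \le 0$; hence $\Phi(1) \le \Phi(\delta)$ and $\Psi(\delta) \le \Psi(1)$, and applying Theorem~\ref{mnp} with $h(t) = t$ promotes these scalar inequalities to ${\rm I} \le {\rm I'}$ and ${\rm V'} \le {\rm V}$ at the operator level.

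For part (ii), under $\delta \le 1$ and $\delta A \ge B$ (so the spectrum of $A^{-1/2}BA^{-1/2}$ lies in $(0,\delta]$), every comparison reverses: Theorem~\ref{ulbroe2} supplies the reversed central block ${\rm V'} \le {\rm III'} \le S \le {\rm II'} \le {\rm I'}$, Corollary~\ref{joropi3} supplies the reversed outer bounds $A - AB^{-1}A \le {\rm V}$ and ${\rm I} \le B - A$, and the same functions $\Phi, \Psi$ — whose derivative signs are valid for all $t, x > 0$ — now give $\Phi(\delta) \le \Phi(1)$ and $\Psi(1) \le \Psi(\delta)$, i.e.\ ${\rm I'} \le {\rm I}$ and ${\rm V} \le {\rm V'}$. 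I expect the main obstacle to be organizational rather than conceptual: one must match each of the many explicit expressions to the correct abstract quantity, verify the resolvent identities that reduce them, and track the directions of all inequalities across the two regimes, while making sure that $\Phi$ and $\Psi$ reproduce \emph{exactly} the $r$- and $k$-functions that define ${\rm I}, {\rm I'}, {\rm V}, {\rm V'}$ through the perspective calculus.
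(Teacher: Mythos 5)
Your proposal is correct, and it follows the route the paper intends: the paper states this corollary with no proof at all, presenting it as the specialization $\alpha=0$, $\beta=1$ (so $h(t)=t$) of Theorem \ref{ulbroe2} combined with the earlier bounds. Your identification of the nine entries via the resolvent identities is right, and your assembly — Proposition \ref{orlurnp} (equivalently the endpoints of Corollaries \ref{joropi} and \ref{joropi3}) for the outer bounds, Theorem \ref{ulbroe2} for the central block — is the intended one. What you add, and this is a genuine gap in the paper rather than in your reading, are the two links ${\rm I}\leq{\rm I'}$ and ${\rm V'}\leq{\rm V}$ (reversed in case (ii)): Proposition \ref{ulbroe3} only establishes ${\rm II}\leq{\rm II'}$ and ${\rm III'}\leq{\rm III}$, and no stated result compares ${\rm I}$ with ${\rm I'}$ or ${\rm V}$ with ${\rm V'}$ (indeed the corollary immediately preceding this one already asserts ${\rm V'}\leq{\rm V}$ without justification). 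Your functions $\Phi$ and $\Psi$, with $\Phi'(t)=\frac{(x-t)^2}{t(x+t)^2}\,x^{\alpha}\geq 0$ and $\Psi'(t)=-\frac{x^{\alpha-1}}{2}\left(\frac{x}{t}-1\right)^2\leq 0$, supply exactly the missing monotonicity-in-$\delta$ argument in the style of Proposition \ref{ulbroe3}, and the computations check out; since the derivative signs hold for all positive $t,x$, the reversal in case (ii) is automatic, as you say. One small point worth flagging: the paper's displayed definition of ${\rm I'}$ carries the coefficient $2\delta$, which is inconsistent with the perspective of $r(x)=\left[\ln\delta+2\left(1-\frac{2\delta}{x+\delta}\right)\right]x^{\alpha}$ used in the proof of Theorem \ref{ulbroe2} and with the corollary's own third entry $(\ln\delta)A+2[A-2\delta A(\delta A+B)^{-1}A]$, which corresponds to coefficient $4\delta$; your identification silently adopts the consistent $4\delta$ reading, which is the correct one.
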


\bibliographystyle{abbrv}
\bibliography{S.Wang_and_Z.Wang_ROE.bib}

\end{document}